\documentclass[11pt,letterpaper]{amsart}

\usepackage[english]{babel}
\usepackage[OT2,T1]{fontenc} 

\usepackage{mathtools}
\usepackage{amssymb}
\usepackage{stmaryrd}
\usepackage{bm}
\usepackage{tikz-cd}
\usepackage[version=4]{mhchem}

\usepackage{comment}
\usepackage{xcolor}
\usepackage{enumitem}
\usepackage{graphicx}
\usepackage{setspace}
\usepackage{url}

\usepackage[title]{appendix}
\usepackage{chngcntr}
\usepackage{apptools}
\usepackage{thmtools}

\usepackage[pagebackref]{hyperref}
\usepackage[nameinlink]{cleveref}
\usepackage{bookmark}

\definecolor{crimsonglory}{rgb}{0.75, 0.0, 0.2}
\definecolor{darkpowderblue}{rgb}{0.0, 0.2, 0.6}

\hypersetup{
	colorlinks = true,
	urlcolor   = darkpowderblue,
	linkcolor  = darkpowderblue,
	citecolor  = crimsonglory
}

\theoremstyle{plain}
\newtheorem{theorem}{Theorem}[section]
\newtheorem{definition}[theorem]{Definition}

\newtheorem{prop}[theorem]{Proposition}
\newtheorem{cor}[theorem]{Corollary}
\newtheorem{lemma}[theorem]{Lemma}
\newtheorem{claim}[theorem]{Claim}
\newtheorem{remark}[theorem]{Remark}
\newtheorem{remarks}[theorem]{Remarks}
\newtheorem{conj}[theorem]{Conjecture}

\DeclareMathOperator{\End}{End}
\DeclareMathOperator{\GL}{GL}

\DeclareMathOperator{\diag}{diag}

\DeclareMathOperator{\disc}{disc}
\DeclareMathOperator{\zar}{Zar}
\DeclareMathOperator{\chara}{char}
\DeclareMathOperator{\crys}{crys}

\DeclareMathOperator{\ord}{ord}
\DeclareMathOperator{\SP}{Sp}
\DeclareMathOperator{\adj}{adj}

\DeclareMathOperator{\ssing}{ssing}
\DeclareMathOperator{\good}{good}

\newcommand{\Sum}[2]{\displaystyle\sum_{#1}^{#2}}

\newcommand{\Z}{\mathbb{Z}}

\renewcommand{\C}{\mathbb{C}}
\newcommand{\Q}{\mathbb{Q}}

\newcommand{\CE}{\mathcal{E}}

\newcommand{\ld}{,\ldots,}

\newcommand{\CO}{\mathcal{O}}

\newcommand{\poly}{poly}
\newcommand{\Prob}{Prob}

\DeclareSymbolFont{cyrletters}{OT2}{wncyr}{m}{n}
\DeclareMathSymbol{\Sha}{\mathalpha}{cyrletters}{"58}

\newcommand{\mathsym}[1]{{}}
\newcommand{\unicode}[1]{{}}

\title{Lang-Trotter phenomena and unlikely intersections}
\author{Christopher Daw and Georgios Papas}
\begin{document}
			\begin{abstract}We show that the Lang–Trotter conjecture for pairs of elliptic curves implies new cases of the Zilber–Pink conjecture for curves in $\mathcal{A}_3$. Unlike previous results for curves in $\mathcal{A}_g$, our result does not rely on any assumption on intersections with the boundary, and in particular applies to potentially compact curves. The argument is based on the $G$-functions method of Yves André.
		\end{abstract}
			\maketitle

\section{Introduction}\label{section:intro}
Following the resolution of the Andr\'e-Oort conjecture by Pila-Shankar-Tsimerman in \cite{pilashankartsimerman}, the Zilber-Pink conjecture has become the central open problem in the area of unlikely intersections in Shimura varieties. While the Zilber-Pink conjecture remains widely open, several cases of it have been established for certain curves. 

In this direction, our main result is the following:
\begin{theorem}\label{mainthmzp}
	Let $S \subset \mathcal{A}_{3}$ be a smooth irreducible curve defined over $\bar{\Q}$ that is not contained in a proper special subvariety of $\mathcal{A}_{3}$.

Assume there exists a point $s_0\in S(\bar{\Q})$ whose corresponding abelian threefold $A_{0}:=A_{s_0}$ is isogenous to $E_{1} \times_{\bar{\Q}} E_{2} \times_{\bar{\Q}} E_{3}$ where
\begin{enumerate}
	\item $E_{1}$ is a CM elliptic curve,
	\item $E_{2}$ and $E_{3}$ are defined over $\Q$ and are not CM or isogenous over $\bar{\Q}$, and
	\item the Lang-Trotter conjecture for pairs holds for $\left(E_{2}, E_{3}\right)$.
\end{enumerate}

Then, the set
$$
\Sha(E^3,S):=\left\{s \in S(\mathbb{C}): A_{s} \text { is isogenous to } E^{3} \text { for some elliptic curve } E\right\}
$$
is finite.\end{theorem}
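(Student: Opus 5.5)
The strategy is the Pila--Zannier one: combine a height bound with Galois orbit lower bounds and o-minimal counting, in the form used for $\mathcal{A}_g$ by Orr, Daw--Orr and others. By Pila--Wilkie together with the Ax--Schanuel / geometric Zilber--Pink results, we may assume $S$ is Hodge generic and not contained in a proper special subvariety. Then finiteness of $\Sha(E^3,S)$ reduces to a \emph{large Galois orbits} statement. Concretely, we need constants $c,\kappa>0$ such that for every $s\in\Sha(E^3,S)$
$$[\Q(s):\Q]\ \ge\ c\,|\disc(\End A_s)|^{\kappa},$$
or, in the non-CM case, a comparable bound in terms of the minimal degree of an isogeny $A_s\to E^3$. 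Once we have this, the standard counting argument applies. The points of $\Sha(E^3,S)$ lie on the special curves $\mathcal{A}_1\hookrightarrow\mathcal{A}_3$, $E\mapsto E^3$, and their Hecke translates, so they are points of unlikely intersection of $S$ with one-dimensional special subvarieties. The Galois conjugates of such a point give many rational points of controlled height in a definable set in the fundamental domain, and Pila--Wilkie forces a positive-dimensional algebraic family. This contradicts the Hodge-genericity of $S$ via Ax--Lindemann. Since a bound on the degree of $s$ controls everything, we can separate the CM points (André--Oort, known) from the rest.

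The Galois bound will come from André's $G$-function method. Near the point $s_0$ we take a local parameter $x$ and the relative de Rham periods of the family $A\to S$. These give a $G$-matrix $Y(x)$ of power series, normalised by the de Rham--Betti comparison at $A_0\sim E_1\times E_2\times E_3$. An isogeny $A_s\sim E^3$ produces many endomorphisms of $H^1(A_s)$: a copy of $M_3(\End E)$. This imposes polynomial relations among the values $Y(x(s))$ that are not consequences of the generic relations, because $S$ is Hodge generic and the trivial relations are those cut out by the Mumford--Tate group $\mathrm{GSp}_6$. The relations must hold at \emph{every} place $v$ where $s$ is $v$-adically close to $s_0$, both archimedean and finite. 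André's Hasse principle then gives $h(s)\ll [K(s):K]^{\delta}$, the height bound. Masser--Wüstholz isogeny estimates then turn this into the lower bound on Galois orbits required above.

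The main obstacle, and the reason hypotheses (1)--(3) appear, is building these \emph{nontrivial global relations} at the finite places, i.e.\ showing that the relations forced by $A_s\sim E^3$ are genuinely new at $v$-adic proximity. At a finite place $v$ where $s$ is $v$-adically close to $s_0$, $A_s$ and $A_0$ share reduction, and the crystalline realisation of $\End(A_s)$ must be compared with that of $A_0$. My plan is to choose the places as follows. By Lang--Trotter for the pair $(E_2,E_3)$, there are infinitely many, indeed a controlled positive density (like $x^{1/2}/\log x$ in the count), of primes $p$ at which $E_2$ and $E_3$ are both supersingular, or have a prescribed Frobenius trace relation. At such primes, together with the CM curve $E_1$ (supersingular at inert primes), the reduction of $A_0$ is isogenous to a power of a supersingular curve. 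The relations coming from $E^3$ then hold $v$-adically at all these places. Combined with the isogeny relations at ordinary places and the archimedean relation, this yields a global relation of bounded degree that is nontrivial.

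For the counting to beat the degree term in André's height bound, the Lang--Trotter density must supply enough places. I expect this quantitative step to be the main technical difficulty: verifying that the total local contribution $\sum_v$ outweighs $[K(s):K]$. It is here that the noncompactness assumption used in earlier $\mathcal{A}_g$ results is replaced by the Lang--Trotter input.
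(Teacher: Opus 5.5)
\paragraph{A genuine gap: the Lang--Trotter step is set up backwards.}
Your frame is right: reduce finiteness to a height bound for points of $\Sha(E^3,S)$ and get that bound from Andr\'e's $G$-functions via the Hasse principle. The paper uses Urbanik's variant of Pila--Zannier for the reduction. But the step you call the crux would not work as written.

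The Hasse principle has no ``enough places'' condition, and you do not choose the places. The relation must hold at \emph{every} $v$ where $s$ is $v$-adically close to $s_0$, and the output is $h(x(s))\le c_1(\deg R_s)^{c_2}$. So the whole issue is keeping $\deg R_s$ small.

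\begin{itemize}
\item At a place of proximity, the reductions of $E_1,E_2,E_3$ are all isogenous to $\tilde E_{s,v}$. Hence they are all ordinary or all supersingular, and bad places are excluded because $E_1$ is CM.
\item At ordinary places, the CM periods of $E_1$ and the unit-root splitting make the blocks $\theta_{ij}$ diagonal. Then $F_{11}^{\adj}F_{21}$ is diagonal, which gives a single relation $R_{s,\ord}$ valid for all ordinary $v$ at once. You never supply such a $v$-independent relation. Without it, the ordinary places alone, whose number is uncontrolled, would ruin the degree.
\item At supersingular places the available relation, e.g.\ $\det F_{21}-d_v\det F_{11}=0$ with $d_v$ coming from an endomorphism of $\tilde E_{s,v}$, genuinely depends on $v$. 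The archimedean relations depend on $v$ too.
\end{itemize}
So your claim that the relations from $E^3$ ``hold $v$-adically at all these places'' and give ``a global relation of bounded degree'' is false. $R_s$ needs one factor per supersingular or archimedean place of proximity, and $\deg R_s\le 2[K(s):\Q]+2|\Sigma(s)_{\ssing}|+3$.

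\paragraph{Lang--Trotter is needed as a sparsity bound.}
Lang--Trotter enters as an \emph{upper} bound on $|\Sigma(s)_{\ssing}|$, not as a supply of primes.
\begin{itemize}
\item Proximity forces $|x(s)|_v<1$. So the primes below these places satisfy $\sum\log p\le [K(s):\Q]h(x(s))$, and each is $\le e^{[K(s):\Q]h(x(s))}$.
\item On its own this gives only $|\Sigma(s)_{\ssing}|\ll [K(s):\Q]h(x(s))$. That makes $h\le c_1(\deg R_s)^{c_2}$ circular.
\item The conjecture for pairs says $\pi_{E_2,E_3}(x)\sim c\log\log x$. Your count $x^{1/2}/\log x$ is the single-curve fixed-trace prediction, and a bound of that size would be worse than the trivial one.
\item The pair conjecture gives $|\Sigma(s)_{\ssing}|\ll [K(s):\Q]\log([K(s):\Q]h(x(s)))$, with primes dividing the conductors handled trivially. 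Hence $\deg R_s\ll [K(s):\Q]^{3/2}\log h(x(s))$.
\item Feeding this into $h(x(s))\le c_1(\deg R_s)^{c_2}$ gives the polynomial height bound, since $\log h$ is dominated by a power of $h$.
\end{itemize}

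\paragraph{What is missing.}
Your proposal lacks three ingredients:
\begin{itemize}
\item the $v$-independent ordinary relation;
\item the $v$-dependent supersingular relations and the resulting degree count;
\item Lang--Trotter used as a $\log\log$ sparsity bound traded against $\log h$.
\end{itemize}
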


In contrast to previous Zilber-Pink-type results in $\mathcal{A}_g$ our \Cref{mainthmzp} does not depend on the existence of an intersection of our curve with the boundary in the Baily-Borel compactification. In particular, it applies to curves that are potentially compact in $\mathcal{A}_3$. Such curves do not arise in the case of $Y(1)^n$, where Zilber-Pink statements are known to hold under assumptions on how the curve $S\subset Y(1)^n$ intersects the boundary, see \cite{daworr4,papaszpy1,daworrpap}, or if they are ``asymmetric'' by work of Habegger-Pila \cite{habeggerpila1}.

\begin{remark}
\Cref{mainthmzp} is a first instance of a more general phenomenon, with related instances in the setting of $Y(1)^n$ established in \cite{papaszpy1note}. The assumptions on the fiber over $s_0$ reflect structural difficulties inherent in the G-functions method when combined with Lang-Trotter-type input, rather than the range of configurations to which these arguments can be expected to apply.
	
	The main role of the point $s_0$ is to provide a ``center'' for an associated family of G-functions. The splitting of the fiber $A_0$ over $s_0$ as a product of $3$ elliptic curves gives increased control over the periods of $A_0$. The assumption that $E_1$ is CM plays a twofold role. On the one hand, it further simplifies the periods of $A_0$. Most importantly, it ensures that places of bad reduction cannot occur as places of proximity between $s_0$ and points $s\in \Sha(E^3,S)$. Allowing such places to appear among the places of proximity would require a non-negligible extension of our exposition, without contributing to the new ideas introduced here.
	
	The assumption that $E_2$ and $E_3$ be defined over $\Q$ stems from the fact that, to the best of our knowledge, the Lang-Trotter conjecture for pairs has only been studied in this setting. Guided by Sato-Tate-type heuristics, in \Cref{section:langtrotternumberfield} we formulate a natural extension of this conjecture to number fields. Such a conjecture would remove the need for $E_2$ and $E_3$ to be defined over $\Q$. Finally, our arguments require a weaker form of this Lang-Trotter conjecture for pairs, see \Cref{remarkafterproof}.\end{remark}

\subsection{Large Galois Orbits and height bounds}\label{section:introlgoandheightbounds}

Following the Pila–Zannier strategy, the main difficulty in proving Zilber–Pink-type statements is to establish lower bounds for the size of Galois orbits. In \cite{daworr} the first named author and M. Orr first noticed a connection between the size of such Galois orbits and height bounds given by Andr\'e's G-functions method. Since then, Andr\'e's method has been the only tool consistently used to establish this missing piece in the Pila-Zannier strategy.

By a variant of the Pila-Zannier method due to D. Urbanik, see Proposition $7.27$ in \cite{urbanik}, our \Cref{mainthmzp} is reduced to the following height bound:
\begin{theorem}\label{heightboundintro}
	Let $S\subset \mathcal{A}_3$ and $s_0\in S(\bar{\Q})$ be as in \Cref{mainthmzp}. Let $h$ be a Weil height on $S$ and assume that the Lang-Trotter conjecture for pairs holds for the pair $(E_2,E_3)$. Then, there exist positive constants $c_0$, $c_1$ such that for all points $s$ in the set $\Sha(E^3,S)$ we have
	\begin{equation}
		h(s)\leq c_0 [\Q(s):\Q]^{c_1}.
	\end{equation}\end{theorem}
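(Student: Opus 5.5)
The plan follows André's G-functions method, centered at $s_0$. First I fix a number field $K$ over which $S$, $s_0$, $E_1,E_2,E_3$ and the isogeny $A_0\sim E_1\times E_2\times E_3$ are defined. I choose a local parameter $x$ on a smooth model of $S$ with $x(s_0)=0$, and replace $S$ by a finite cover so that the family $f:\mathcal{X}\to S$ acquires unipotent, here trivial, local monodromy at $s_0$. Since $s_0$ is a smooth point with good fiber, the relative de Rham cohomology $H^1_{dR}(\mathcal{X}/S)$ is trivialised near $s_0$ by the Gauss–Manin connection. The entries of the relative period matrix $Y(x)$, which expresses the de Rham basis in terms of the parallel-transported (Betti or crystalline) basis of $A_0$, then form a family of G-functions. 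This uses André's theorem on G-functions arising from geometry.

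Archimedean and $v$-adic evaluation of $Y(x)$ at $x(s)$, for $s$ close to $s_0$, yields the period matrix of $A_s$ composed with the inverse periods of $A_0$. For $A_0\sim E_1\times E_2\times E_3$, the period matrix of $A_0$ is block diagonal. The Hodge-theoretic condition that $A_s\sim E^3$ says that $\End^0(A_s)\supseteq M_3(\Q)$. This gives many polynomial relations among the entries of $Y(x(s))$: the $3$ rows corresponding to $E$ become proportional, so the $2\times2$ minors of appropriate matrices vanish. Following Daw–Orr and Papas, I will convert these into "global relations". These are polynomials $R_s$ with coefficients in $\bar{\Q}$, of degree bounded independently of $s$, that vanish at $x(s)$ for every place $v$ of $\Q(s)$ where $s$ is $v$-adically close to $s_0$.

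I then need two things. The first is that these relations are nontrivial, i.e.\ that $R_s$ does not vanish identically on $Y(x)$. This follows from the Zariski closure of $Y(x)$ being computed via André's theorem on the generic Mumford–Tate group. Because $S$ is Hodge generic, the monodromy group is $\SP_6$, and the relations cutting out the "$E^3$" locus are not trivial relations. The periods of $A_0$ enter the relations, so I must check that this remains true after substituting them. Here the CM property of $E_1$ and the non-isogeny of $E_2,E_3$ simplify the relevant period torsor. The second thing is that, for each proximity place $v$ with $s$ close to $s_0$, the relation holds $v$-adically. For finite places, $v$-adic proximity means $A_s$ and $A_0$ have the same reduction mod $v$. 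Using crystalline comparison, I need the Frobenius/endomorphism structure of $A_0\bmod v$ to force a relation compatible with $A_s\sim E^3$.

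The main obstacle lies at finite places of proximity. There, $A_s\bmod v\sim \bar E^3$, while $A_0\bmod v\sim \bar E_1\times\bar E_2\times\bar E_3$. Hence $\bar E_1,\bar E_2,\bar E_3$ are all isogenous mod $v$. If this happened at many places, no useful $v$-adic relation would emerge beyond what the archimedean one gives. The Lang–Trotter conjecture for pairs will bound the number of primes $p\le X$ at which $E_2,E_3$ become isogenous mod $p$ (both ordinary with equal Frobenius traces, or both supersingular) by roughly $X^{1/2+\epsilon}$ or less. Combined with $E_1$ being CM, which excludes bad reduction of $E_1$ as a proximity place, this controls the sum of $\log p$ over such "exceptional" proximity places.

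Away from these exceptional places, $A_0\bmod v$ has a non-isogenous factor. That forces an extra algebraic relation on $Y$ at $x(s)$, or rules out proximity altogether. With the exceptional-place contribution bounded, André's Hasse principle (the Bombieri–André height bound for values of G-functions satisfying nontrivial global relations) gives $h(x(s))\le c_0[\Q(s):\Q]^{c_1}$. Since $h(x(s))$ and $h(s)$ are comparable, this yields \Cref{heightboundintro}. Points $s$ not close to $s_0$ at any place have $h(s)$ bounded trivially, since non-proximity bounds the local contributions.
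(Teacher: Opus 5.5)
There is a genuine gap, located exactly at what you call the main obstacle. Your split into exceptional and non-exceptional finite places is empty on one side. If $s$ is $v$-adically close to $s_0$ at a finite place, then $\tilde A_{s,v}=\tilde A_{0,v}$, so every $\tilde E_{i,v}$ is isogenous to $\tilde E_{s,v}$. Hence every finite proximity place is exceptional, and your scheme has to count all of them, which your count cannot do. Proximity places satisfy $|x(s)|_v<1$, so the primes below them lie below $X:=e^{[K(s):\Q]h(x(s))}$; bounding $\sum\log p$ over them only recovers this trivial bound. If each such place contributes its own factor to $R_s$, the Andr\'e--Bombieri inequality $h(x(s))\le c_1(\deg R_s)^{c_2}$ is useful only when the number of such places is polynomial in $\log\log X=\log([K(s):\Q]h(x(s)))$. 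A count of size $X^{1/2+\epsilon}$ makes $\deg R_s$ exponential in $h(x(s))$, and the inequality becomes vacuous. Moreover, the hypothesis actually available, \Cref{langtrottersupersingularpairs}, controls only primes of simultaneous supersingular reduction; it says nothing about ordinary primes at which $\tilde E_2,\tilde E_3$ are isogenous. Relatedly, your sketch does not produce a relation of degree bounded independently of $s$ that holds at all proximity places. The natural relations at archimedean and supersingular places, $\det F_{11}=d_v$ and $\det F_{21}=d_v\det F_{11}$, involve constants $d_v$ that depend on $v$.

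The missing idea is that ordinary places need no counting at all, because there one gets a single relation independent of $v$. Write $Z=\iota_v(J^{-1}[\psi]_{dR}Y_G(s)[\phi^\vee]_{dR}J)=(F_{ij})$ in $2\times 2$ blocks. Then $F_{i1}=\Pi_v(E_s)\theta_{i1}\Pi_v(E_1)^{-1}$, where $\theta_{i1}$ is the crystalline matrix of a morphism $\tilde E_{1,v}\to\tilde E_{s,v}$. At an ordinary place, $E_1$ being CM gives $\Pi_v(E_1)=\diag(\varpi,\varpi^{-1})$, and in unit-root adapted bases the $\theta_{i1}$ are diagonal. Hence $F_{11}^{\adj}F_{21}$ is diagonal. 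Its $(1,2)$-entry is a quadratic polynomial in $Y_G(s)$ whose coefficients depend only on $[\psi]_{dR}$ and $[\phi^\vee]_{dR}$, so it vanishes at every ordinary proximity place simultaneously. This is the main role of the CM hypothesis, beyond excluding bad reduction. The only $v$-dependent quadratic factors then come from the at most $[K(s):\Q]$ archimedean places and from the supersingular places, so $\deg R_s\le 2[K(s):\Q]+2|\Sigma(s)_{\ssing}|+3$. Once $h(x(s))$ is large, \Cref{langtrottersupersingularpairs} bounds $|\Sigma(s)_{\ssing}|$ by $O([K(s):\Q]\log([K(s):\Q]h(x(s))))$, up to $O([K(s):\Q])$ places above primes dividing the conductors of $E_2,E_3$. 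This keeps the degree polynomial in $[K(s):\Q]$ and only logarithmic in $h$. You would also still need to check that these specific polynomials, with the lower-triangular $[\psi]_{dR}$ substituted, do not lie in $I(\SP_6)$. This does not follow from Hodge genericity alone; the paper verifies it by a Gr\"obner basis computation.
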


\subsubsection{Comparison with existing results} With the notable exception of \cite{habeggerpila1}, all previous Zilber-Pink-type results in the Shimura setting utilize Andr\'e's G-functions method to establish the crucial lower bounds for the size of Galois orbits. In all of these the point $s_0$ that appears in \Cref{heightboundintro} is chosen so that it reflects the existence of a degeneration of the geometric objects in the moduli space, or equivalently an intersection of the curve $S$ with the boundary in some compactification of the ambient Shimura variety.

This type of assumption has been persistently part of the G-functions method starting from Andr\'e's original work in \cite{andre1989g}. In the case of curves in $\mathcal{A}_g$ various versions of this assumption appear in \cite{daworr,daworr2,daworr3,daworr5,papaszp}. In the more general study of families of projective varieties similar assumptions are imposed on a point $s_0$ phrased in the language of mixed Hodge structures, see \cite{papasbigboi,urbanik,urbanikgalois}.

From the perspective of the G-functions method, the existence of a point $s_0$ on the intersection of the curve with the boundary enforces ``degeneration modulo $v$ conditions'' for points of interest $s$ for any finite place $v$. These conditions are then used to either outright rule out the $v$-adic proximity to $s_0$, as in \cite{andre1989g,papaszp} for example, or to construct relations that have no dependence on the place $v$, as is done for example in \cite{papasbigboi,daworr5,urbanik}.

The idea to consider instead $s_0$ to be a point that is inside the moduli space originated in work of Andr\'e \cite{andremots} and Beukers \cite{beukers}, where they both study the values of G-functions at CM points in $Y(1)$. This point of view has been taken up again in our recent work in \cite{daworrpap,papaspadicpart1,papaspadicpart2,papaspadicpart3}. 

Dropping the degeneration conditions on $s_0$, one loses a priori any uniform control on the set of finite places at which $s$ can be 
$v$-adically close to $s_0$, and as a result the ability to uniformly construct relations coming from global geometric data. Based on our previous work in \cite{daworrpap,papaspadicpart1,papaspadicpart2,papaspadicpart3}, for places $v$ of non-supersingular reduction of $A_0$ one can still hope to construct relations that are uniform in $v$. The essential difficulty we are left with is thus controlling the contribution from places of supersingular reduction of $A_0$. It is precisely here that we are led to invoke the Lang–Trotter conjecture for pairs of elliptic curves, see \Cref{langtrottersupersingularpairs}.

\subsection{Outline of the paper}

We begin in \Cref{section:periodmatrices} with a brief review of period matrices in both the archimedean and non-archimedean setting. Following the exposition in \cite{papaspadicpart1}, we give an explicit description of the period matrices of abelian threefolds as those that appear in \Cref{mainthmzp}. Following this, in \Cref{section:backgroundgfuns} we give a brief account of the process of associating a family of G-functions to a pair $(X\rightarrow S, s_0\in S(\bar{\Q}))$, consisting of a $1$-parameter family of abelian threefolds $X\rightarrow S$ and a distinguished point $s_0$ on the base curve.

In \Cref{section:relations} we describe the construction of relations among the values of these G-functions at points $s$ as in \Cref{mainthmzp}. This is handled on a ``case-by-case'' basis depending on the type of the place $v$ with respect to which $s$ is $v$-adically close to $s_0$. The main part of our exposition ends with \Cref{section:heightbounds} where we combine our relations with the Lang-Trotter conjecture for pairs of elliptic curves to establish the height bounds we need. \Cref{section:appendixmathematica} includes a series of codes written in Wolfram Mathematica that were used to establish the non-triviality of the relations constructed in \Cref{section:relations}.

\subsection{Notation}
Fix a number field $K$. We write $\Sigma_K$ for the set of places of $K$ and $\Sigma_{K,\infty}$ (resp. $\Sigma_{K,f}$) for the infinite (resp. finite) places of $K$. 

Fix $v\in\Sigma_{K}$. We write $K_v$ for the completion of $K$ at $v$. For any variety $X$ over $K$, we write $X_v:=X\times_K K_v$ and $X^{an}_v$ for its analytification. We write $\C_v$ for either $\C$ or $\C_p$ depending on whether $v\in\Sigma_{K,\infty}$ (real or complex) or $v\in\Sigma_{K,f}$. We write $\iota_v:K\hookrightarrow \C_v$ for the associated embedding. If $v\in\Sigma_{K,f}$, we write $k_v$ for the residue field of $K_v$, we write $W(k_v)$ for the ring of Witt vectors over $k_v$, and we write $K_{v,0}$ for the fraction field of $W(k_v)$.

Fix an abelian variety $A$ defined over $K$. We denote by $\End_{\bar{\Q}}(A)$ its ring of endomorphisms defined over $\bar{\Q}$ and define $\End^{0}_{\bar{\Q}}(A):=\End_{\bar{\Q}}(A)\otimes\Q$. If $v$ is a finite place of good reduction for $A$, we write $\tilde{A}_v$ for the reduction of $A_v$ modulo $v$. 

If \[y(x)=\Sum{n=0}{\infty}a_n x^n\in K[[x]]\] is a power series, we write \[\iota_v(y(x)):=\Sum{n=0}{\infty}\iota_v(a_n)x^n\] for the corresponding power series in $\C_v[[x]]$. For a family $\mathcal{Y}$ of power series $y_1\ld y_N\in K[[x]]$, we write $R_v(y_{j})$ for the $v$-adic radius of convergence of $y_j$ and $R_{v}(\mathcal{Y}):=\min R_v(y_j)$.\\

\textbf{Acknowledgments:} For part of this work, CD was supported by a grant from the School of Mathematics at the Institute for Advanced Studies, Princeton. CD acknowledges the use of ChatGPT for conceptualisation. GP thanks Gal Binyamini, Nick Katz, Dmitry Novikov, and Jonathan Pila for their questions following a lecture on this work, which led to several improvements in the exposition, including the discussion in \Cref{section:langtrotternumberfield}. Throughout this work GP was supported by the Minerva Research Foundation Member Fund while in residence at the Institute for Advanced Study for the academic year $2025$-$26$.

\section{Basics on period matrices}\label{section:periodmatrices}

The main objects of our study are period matrices and relations among their entries. We begin with a brief summary of these objects and record a convenient description of period matrices for the abelian varieties appearing in \Cref{mainthmzp}.

\subsection{Notation on period matrices}\label{section:notationperiodmatrices}
Throughout this subsection we fix an abelian variety $A$ defined over a number field $K$. For simplicity we assume that $A$ has everywhere semi-stable reduction over $K$ and write $\Sigma_{K, \good}(A)$ for the set of places of $K$ over which $A$ has good reduction.

Given a place $v \in \Sigma_{K, \good}(A) \cup \Sigma_{K,\infty}$ we set
$$
H_{v}^{1}(A)= \begin{cases}H^{1}\left(A_{v}^{a n}, \mathbb{Q}\right) & \text { if } v \in \Sigma_{K, \infty} \text { and } \\ H_{\crys}^{1}\left(\widetilde{A}_{v} / W\left(k_{v}\right)\right) \otimes K_{v, 0} & \text { if } v \in \Sigma_{K, f} ,\end{cases}
$$
where the first cohomology group is Betti cohomology and the latter is crystalline cohomology. 

With $A$ as above we have canonical comparison isomorphisms
$$
\rho_{v}(A): H_{d R}^{1}(A / K) \otimes\C \rightarrow H_{v}^{1}(A) \otimes \C,
$$
for $v \in \Sigma_{K, \infty}$ due to Grothendieck. For $v \in \Sigma_{K,\good}(A)$ we similarly have 
$$
\rho_{v}(A): H_{d R}^{1}(A / K) \otimes K_{v} \rightarrow H_{v}^{1}(A) \otimes K
$$
due to Berthelot-Ogus \cite{bertogus}.

Upon choosing bases of $H_{dR}^{1}(A / K)$ and $H_{v}^{1}(A)$ we record these isomorphisms via matrices. These bases will be implicit in our exposition, so we denote these matrices for simplicity by ``$\Pi_{v}(A)$''.

For simplicity, we work with bases of $H_{d R}^{1}$ that satisfy the following:
\begin{definition}\label{defnhodgebasis}
	Let $X$ be a principally polarized $g$-dimensional abelian variety over a number field $K$.  We call an ordered basis \begin{center}
		$\Gamma_{dR}(X):=\{\omega_1, \ldots,\omega_g,\eta_1,\ldots,\eta_g\}$
	\end{center} of $H^1_{dR}(X/K)$ a \textbf{Hodge basis} (of $X$) if the following are true:
\begin{enumerate}
	\item $\omega_1,\ldots,\omega_g$ are a basis of the first part of the filtration $F^1=e^{*}\Omega_{X/K}\subset H^1_{dR}(X/K)$, and 
	
	\item $\Gamma_{dR}(X)$ is a symplectic basis, meaning that $\langle \omega_i,\eta_{j}\rangle=\delta_{i,j}$ and $\langle \omega_i,\omega_j\rangle=\langle \eta_i,\eta_j\rangle=0$, with respect to the Riemann bilinear form.
\end{enumerate}\end{definition}

\subsection{Period matrices for certain abelian threefolds}\label{section:periodmatlemmas}

The period matrices we are most interested in are those associated to the abelian threefolds corresponding to the points in \Cref{mainthmzp}. We record here a convenient description of these matrices.

We begin with the following elementary:
\begin{lemma}\label{lemmaperiods1}
Let $E_{1},$ $E_{2}$, and $E_{3}$ be elliptic curves defined over a number field $K$ and let $A^{\prime}=E_{1} \times_K E_{2} \times_K E_{3}$. There exists a Hodge basis of $A^{\prime}$ such that for all $v \in \Sigma_{K,\good}\left(A^{\prime}\right) \cup \Sigma_{K, \infty}$ there exists a symplectic basis of $H_{v}^{1}\left(A^{\prime}\right)$ with respect to which
$$
\Pi_{v}\left(A^{\prime}\right)=J \left(\begin{array}{lll}
	\Pi_{v}\left(E_{1}\right) & 0& 0\\
	0& \Pi_{v}\left(E_{2}\right) &0 \\
	0& 0& \Pi_{v}\left(E_{3}\right)
\end{array}\right) J^{-1},
$$
where $J \in\GL_6(\Z)$ has entries in $\{0,1\}$.

Moreover, if some $E_{j}$ is CM and $v$ does not divide the discriminant of $\End^{0}_{\bar{\Q}}\left(E_{j}\right)$ the above bases may be chosen so that there exists $\varpi_{v} \in \mathbb{C}_{v}$ such that
$$
\Pi_{v}\left(E_{j}\right)= \begin{cases}\left(\begin{array}{cc}
		\varpi_{v} & 0 \\
		0 & \varpi_{v}^{-1}
	\end{array}\right), & \text { if } v \in \Sigma_{K, f} \\
	\left(\begin{array}{cc}
		\frac{\varpi_{v}}{2 \pi {i}} & 0 \\
		0 & \varpi_{v}^{-1}
	\end{array}\right), & \text { if } v \in \Sigma_{K, \infty}.\end{cases}
$$
\end{lemma}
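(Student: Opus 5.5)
The plan is to reduce everything to the Künneth decomposition and the functoriality of the comparison isomorphisms, and then to handle the CM refinement separately using the eigenspace decomposition under the CM action.

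\textbf{Step 1: the product case.} For $A'=E_1\times_K E_2\times_K E_3$ we have Künneth decompositions
\[
H^1_{dR}(A'/K)\cong\bigoplus_j H^1_{dR}(E_j/K), \qquad H^1_v(A')\cong\bigoplus_j H^1_v(E_j),
\]
induced by the projections $\mathrm{pr}_j$. The comparison isomorphisms $\rho_v$ (Grothendieck, resp. Berthelot--Ogus) are functorial, so they commute with the $\mathrm{pr}_j^*$. Hence $\rho_v(A')$ is the direct sum of the $\rho_v(E_j)$.

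For each $j$, I choose a Hodge basis $\{\omega^{(j)},\eta^{(j)}\}$ of $H^1_{dR}(E_j/K)$ (possible since $\langle\,,\rangle$ is perfect and $F^1$ is a line), and a symplectic basis $\{\gamma^{(j)},\delta^{(j)}\}$ of $H^1_v(E_j)$. The product principal polarization makes the Riemann form on $A'$ the orthogonal sum of those on the factors. Therefore the union of these bases is symplectic, after reordering to the form $(\omega_1,\omega_2,\omega_3,\eta_1,\eta_2,\eta_3)$ required by \Cref{defnhodgebasis}.

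In the block-per-factor ordering, the period matrix is $\diag(\Pi_v(E_1),\Pi_v(E_2),\Pi_v(E_3))$. Passing to the Hodge ordering on both sides conjugates by the same permutation matrix $J$, which has entries in $\{0,1\}$. The de Rham basis is chosen once over $K$, independently of $v$; only the basis of $H^1_v$ depends on $v$.

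\textbf{Step 2: the CM refinement.} Suppose $E_j$ has CM by an order in $F=\End^0_{\bar\Q}(E_j)$, enlarging $K$ if necessary so that $F\subset K$ and the CM is defined over $K$. The action of $F$ splits $H^1_{dR}(E_j/K)$ into two eigenlines. One of them is $F^1$, spanned by $\omega$, on which $F$ acts through the embedding $\sigma$. The other is a complement, spanned by $\eta'$, on which $F$ acts through $\bar\sigma$. Both lines are isotropic, so rescaling gives a Hodge basis of eigenvectors. Since $\rho_v$ is equivariant for the endomorphisms, it maps eigenlines to eigenlines. It therefore suffices to find a basis of $H^1_v(E_j)\otimes\C_v$, or an appropriate model of it, made of eigenvectors.

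\textbf{The finite places.} For $v\nmid\disc(F)$ with good reduction, there are two cases.
\begin{itemize}
\item If $v$ is split in $F$, the reduction is ordinary. Crystalline cohomology over $K_{v,0}$ then splits into eigenlines for $F\otimes\Q_p\cong\Q_p\times\Q_p$, giving a basis $\gamma,\delta$ with $\rho_v(\omega)=\varpi_v\gamma$ and $\rho_v(\eta)=\mu\delta$.
\item If $v$ is inert, $F\otimes \Q_p$ is the unramified quadratic extension, which embeds in $K_{v,0}$ after possibly enlarging $K$. The same eigenspace splitting then works over $K_{v,0}$.
\end{itemize}
The hypothesis $v\nmid\disc(F)$ is exactly what excludes the ramified case, where the eigenspaces do not split over $K_{v,0}$. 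In both cases, rescaling $\delta$ and using that $\rho_v$ respects the pairing, $\varpi_v\mu=1$, so $\mu=\varpi_v^{-1}$.

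\textbf{The archimedean places.} For $v$ infinite the analogous argument uses a basis of $H^1(E^{an},\Q)\otimes\C$ adapted to the eigenlines. The Betti pairing differs from the de Rham one by the factor $2\pi i$, which is where $\varpi_v/2\pi i$ comes from. Strictly, one should phrase this as a choice of basis of $H^1_v\otimes\C$ rather than of $H^1_v$ itself.

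\textbf{Main obstacle.} The delicate point is that the eigenvector basis of $H^1_v$ is generally only defined over an extension, not over $\Q$ or $K_{v,0}$. So one must interpret ``symplectic basis of $H^1_v$'' as allowing scalars $\C_v$, and then check that the symplectic normalization survives. I would handle this by the determinant argument from the pairing: compatibility of $\rho_v$ with the pairings, up to the Tate twist $2\pi i$ at infinite places, forces the product of the diagonal entries.
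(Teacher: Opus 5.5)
Your proposal is correct and follows the paper's argument, which is given only by citation. The first part is the K\"unneth splitting, compatible with the comparison isomorphisms and the product polarization, with $J$ the permutation from the ordering $(\omega_1,\eta_1,\omega_2,\eta_2,\omega_3,\eta_3)$ to the Hodge ordering; the CM refinement is the decomposition into $F$-eigenlines on both sides, as in the cited Lemma 2.6 of \cite{papaspadicpart2}. You are right that at archimedean places the diagonal form forces eigenvectors in $H^1_v\otimes\C$ (or $\otimes F$) rather than a rational basis; in the inert case no enlargement of $K$ is needed, since $F_{\mathfrak p}/\Q_p$ is unramified and so already lies in $K_{v,0}$ once $F\subset K$.
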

\begin{proof}This follows by the same arguments as those in Remark $2.2$ and Lemma $2.5$ of \cite{papaspadicpart1}.
	
We note that the matrix $J$ corresponds to the change of basis matrix between ordered bases of the form $\left\{\omega_{1}, \eta_{1}, \omega_{2}, \eta_{2}, \omega_{3}, \eta_{3}\right\}$ and $\left\{\omega_{1}, \omega_{2}, \omega_{3}, \eta_{1}, \eta_{2}, \eta_{3}\right\}$. Here $\left\{\omega_{i}, \eta_{i}\right\}$ stands for a symplectic basis of $H_{d R}^{1}\left(E_{i} / K\right)$. In more detail, we record here that
$$J=\begin{pmatrix}
	1 & 0 & 0 & 0 & 0 & 0 \\
	0 & 0 & 0 & 1 & 0 & 0 \\
	0 & 1 & 0 & 0 & 0 & 0 \\
	0 & 0 & 0 & 0 & 1 & 0 \\
	0 & 0 & 1 & 0 & 0 & 0 \\
	0 & 0 & 0 & 0 & 0 & 1
\end{pmatrix}.$$

Finally, for the description of $\Pi_v(E_j)$ in the ``moreover part'' we refer the reader to Lemma $2.6$ of \cite{papaspadicpart2} and its proof.\end{proof}

Next we record the effect of isogenies in the above setting. Given an isogeny $\phi: A \rightarrow A^{\prime}$ over $K$, we write $[\phi]_{d R}$ for the matrix associated to the pullback of $\phi$ on $H_{d R}^{1}$ and implicit choices of Hodge bases. Similarly given $v \in \Sigma_{K, \good}(A) \cup \Sigma_{K, \infty}$ we write $[\phi]_{v}$ for the matrix associated to the pullback of $\phi$ on the respective $v$-adic cohomology $H^1_v$ and an implicit choice of bases.

\begin{lemma}\label{lemmaperiods2}
	Let $\phi: A \rightarrow A^{\prime}=E_{1} \times_K E_{2} \times_K E_{3}$ be an isogeny defined over $K$ and fix $v \in \Sigma_{K, \good}(A) \cup \Sigma_{K,\infty}$.
	\begin{enumerate}
		\item For any choice of Hodge bases and for any choice of bases of $v$-adic cohomology groups we have
		$$
		[\phi]_{d R} \cdot\Pi_{v}(A)=\Pi_{v}\left(A^{\prime}\right) \cdot[\phi]_{v},
		$$
		where $[\phi]_{d R}$ is lower triangular.
		
		\item Given a Hodge basis of $A^{\prime}$ we may choose a Hodge basis of $A$ such that
		$$
		[\phi]_{d R}=\begin{pmatrix}
			I_{3} & 0 \\
			B & C
		\end{pmatrix},
		$$where $B \in M_{3 \times 3}(K)$ and $C \in G L_{3}(K)$.\end{enumerate}\end{lemma}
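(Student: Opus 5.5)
The approach is to prove (1) by functoriality of the comparison isomorphisms, and (2) by choosing a Hodge basis of $A$ adapted to $\phi^*$.

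For (1), the comparison isomorphisms $\rho_v$ of Grothendieck and Berthelot--Ogus are functorial with respect to morphisms of abelian varieties over $K$ with good reduction at $v$. An isogeny $\phi$ over $K$ extends to the Néron models, so its reduction $\tilde\phi_v$ induces a pullback on crystalline cohomology. Functoriality then gives a commutative square
\[
\rho_v(A)\circ(\phi^*_{dR}\otimes 1)=(\phi^*_v\otimes 1)\circ\rho_v(A').
\]
Writing this in the chosen bases, with the paper's convention on how $\Pi_v$ is recorded, yields $[\phi]_{dR}\cdot\Pi_v(A)=\Pi_v(A')\cdot[\phi]_v$. The only care needed is to transpose consistently with the row/column conventions.

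The lower triangular shape comes from the Hodge filtration. Pullback $\phi^*$ is a morphism of filtered $K$-vector spaces, so $\phi^*(F^1H^1_{dR}(A'))\subset F^1H^1_{dR}(A)$. Since the first $g$ basis vectors span $F^1$ in a Hodge basis, the block of $[\phi]_{dR}$ expressing the images of $\omega'_i$ in terms of the $\eta_j$ vanishes. With the paper's convention, this puts the zero block in the upper right, so $[\phi]_{dR}$ is block lower triangular.

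For (2), start from a Hodge basis $\{\omega'_i,\eta'_i\}$ of $A'$. Since $\phi$ is an isogeny, $\phi^*$ is an isomorphism on $H^1_{dR}$ over $K$ and is strictly compatible with the filtrations. Set $\omega_i:=\phi^*\omega'_i$; these form a basis of $F^1H^1_{dR}(A)=e^*\Omega_{A/K}$, and this choice produces the $I_3$ block. Next extend to a symplectic basis $\{\omega_i,\eta_i\}$ of $H^1_{dR}(A/K)$ with respect to the form induced by the principal polarization of $A$. This is possible because $F^1$ is Lagrangian: pick any complement of $F^1$ and use the nondegenerate pairing to make it dual to $\omega_i$ and isotropic, by standard Gram--Schmidt over $K$. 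The images $\phi^*\eta'_j$ are then arbitrary vectors, which gives the blocks $B\in M_{3\times3}(K)$ and $C$. Since $\phi^*$ is invertible, $C\in GL_3(K)$.

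The main subtlety is compatibility with the polarization. $\phi^*$ does not preserve the symplectic forms; instead $\phi^*\langle\cdot,\cdot\rangle_{A'}=\deg$-type multiple$\cdot\langle\cdot,\cdot\rangle_A$ only up to the polarization pullback. So we cannot take $\eta_i=\phi^*\eta'_i$. This is why $C$ is merely invertible rather than the identity, and the statement's freedom in $B,C$ accommodates exactly this.
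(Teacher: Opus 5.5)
Your proposal is correct and is essentially the argument the paper defers to (\S 2.2.1 of \cite{papaspadicpart1}). For (1) you use functoriality of the comparison isomorphisms together with $\phi^*(F^1)\subset F^1$, and for (2) you set $\omega_i=\phi^*\omega_i'$ and complete the Lagrangian subspace $F^1$ to a symplectic basis over $K$. One point you leave implicit is that $A'$ also has good reduction at $v$ (good reduction is an isogeny invariant by N\'eron--Ogg--Shafarevich), which is needed for $\Pi_v(A')$ and $[\phi]_v$ to make sense; your closing remark about how $\phi^*$ interacts with the polarizations is imprecise, but it plays no role in the proof.
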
 
\begin{proof}
	This follows from the same arguments as those that appear in $\S 2.2.1$ of \cite{papaspadicpart1} .
\end{proof}

Combining the above lemmas yields the following:
\begin{cor}\label{periodsdescription}
	For $A$ and $A^{\prime}$ as in \Cref{lemmaperiods2} there exist Hodge bases of $A$ and $A^{\prime}$ such that
	\[
	\begin{pmatrix}	I_{3} & 0\\	B & C\end{pmatrix} \Pi_{v}(A)=J \begin{pmatrix}\Pi_{v}\left(E_{1}\right) & 0 & 0 \\
		0 & \Pi_{v}\left(E_{2}\right) & 0 \\0 & 0 & \Pi_{v}\left(E_{3}\right)\end{pmatrix} J^{-1} [\phi]_{v},
		\]
	where $C \in G L_{3}(K)$.
\end{cor}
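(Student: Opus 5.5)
The corollary follows by substituting the description of $\Pi_v(A')$ from \Cref{lemmaperiods1} into the isogeny relation of \Cref{lemmaperiods2}. The one point needing care is that both lemmas fix bases, and these choices must be compatible.

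First, apply \Cref{lemmaperiods1} to $A'=E_1\times_K E_2\times_K E_3$. This gives a Hodge basis $\Gamma_{dR}(A')$ and, for each $v\in\Sigma_{K,\good}(A')\cup\Sigma_{K,\infty}$, a symplectic basis of $H^1_v(A')$ in which
\[
\Pi_v(A')=J\,\diag\bigl(\Pi_v(E_1),\Pi_v(E_2),\Pi_v(E_3)\bigr)\,J^{-1}.
\]
The set of places is right: $A$ and $A'$ are isogenous, so they have the same places of good reduction, and $\Sigma_{K,\good}(A)=\Sigma_{K,\good}(A')$.

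Next, keep this Hodge basis of $A'$ fixed and apply part (2) of \Cref{lemmaperiods2}. This gives a Hodge basis of $A$ for which
\[
[\phi]_{dR}=\begin{pmatrix} I_3 & 0\\ B & C\end{pmatrix},\qquad B\in M_{3\times 3}(K),\ C\in\GL_3(K).
\]
For the $v$-adic cohomology of $A$, take any basis; $[\phi]_v$ is then just the matrix of $\phi^*$ with respect to that basis and the chosen basis of $H^1_v(A')$.

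Finally, part (1) of \Cref{lemmaperiods2} holds for any choice of Hodge bases and any choice of $v$-adic bases. In particular it holds for the bases chosen above, which gives $[\phi]_{dR}\,\Pi_v(A)=\Pi_v(A')\,[\phi]_v$. Substituting the two displayed expressions yields the stated identity.

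The only thing to check, and the nearest thing to an obstacle, is that the choices are compatible. Part (2) of \Cref{lemmaperiods2} lets us start from an arbitrary Hodge basis of $A'$, in particular the one from \Cref{lemmaperiods1}. Part (1) imposes no constraint on the bases. So the choices never conflict.
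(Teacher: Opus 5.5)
Your proposal is correct and matches the paper's argument, which simply says ``combining the above lemmas'': fix the Hodge basis of $A'$ and the symplectic $v$-adic bases from \Cref{lemmaperiods1}, choose the Hodge basis of $A$ via part (2) of \Cref{lemmaperiods2}, and substitute into part (1). Your explicit checks are exactly the bookkeeping the paper leaves implicit: the basis choices are compatible, the de Rham bases do not depend on $v$, and $\Sigma_{K,\good}(A)=\Sigma_{K,\good}(A')$.
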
 
\section{Background on G-functions}\label{section:backgroundgfuns}

We give a summarized version of the connection between relative periods and G-functions. We suppress several technical details and work in a slightly simplified setting. These technical details, while necessary to establish the height bounds we want in full generality, would significantly increase the length of our exposition. For further details, tailored to the setting we follow here, we refer the reader to $\S 3$ of \cite{papaspadicpart1} and \cite{daworrpap}.

\subsection{Working assumptions}\label{section:backgroundassumptions}

Fix a family of principally polarized abelian threefolds $f:X\rightarrow S$ over a curve defined over $\bar{\Q}$. We assume that the curve $S$ is irreducible and smooth. We also fix a point $s_0\in S(\bar{\Q})$ which will serve as the ``center'' of our G-functions and write $X_0:=X_{s_0}$ for the fiber of our family over $s_0$. Let also $K$ be a number field over which both $f:X\rightarrow S$ and $s_0$ are defined.

To establish the height bounds we want, we are allowed to base change the geometric picture we start with, i.e. $f:X\rightarrow S$, with either a finite cover or open subset of the base curve $S$ or by a finite extension of $K$. See, for example, the exposition in $\S 2.2$ of \cite{papaszpy1}. 

After such a base change, thanks to work of the first named author with M. Orr in \cite{daworr4}, we may assume that our base curve is equipped with a local parameter $x\in K(S)$ at $s_0$ satisfying:
\begin{enumerate}
	\item $x$ has only simple zeroes, all defined over $K$, and
	\item the morphism $x:S\rightarrow \mathbb{P}^1$ is Galois.
\end{enumerate}

For more details, see Lemma $5.1$ and the discussion in $\S 5.1$ of \cite{daworr4}. We will refer to such an $x$ as a ``good local parameter of $S$ at $s_0$'' from now on.

Similarly, we may assume that the ``central fiber'' $X_0$ has everywhere semi-stable reduction over the field $K$, thanks to Grothendieck's semi-stable reduction Theorem, see, for example, $\S 7.4$ in \cite{neron}. In the same spirit, we assume that all of the endomorphisms of $X_0$ are defined over $K$, and so too are any isogenies that appear henceforth.

\subsection{G-functions associated to a family and a center}\label{section:gfunsassociated}

We consider a family $f:X\rightarrow S$ of principally polarized abelian threefolds defined over a number field $K$ together with a point $s_0\in S(K)$ satisfying the assumptions of \Cref{section:backgroundassumptions}. Suppose that $x$ is a good local parameter of $S$ at $x_0$.

Associated to the above data we have the differential module with connection given by $\mathcal{M}:=H^1_{dR}(X/S)$, the relative de Rham cohomology, and $\nabla$, its Gauss-Manin connection. Working in a sufficiently small affine open neighborhood $U$ of $s_0$ we may find a (full) basis of global sections of $\mathcal{M}(U)$, say $\beta$. The action of $\nabla$ with respect to $\beta$ encodes the differential module $\mathcal{M}(U)$ as a differential system over $\bar{\Q}(x)$ via the local parameter $x$.

By seminal work of Y. Andr\'e, see \cite{andre1989g}, this yields a matrix of G-functions $Y_{G,\beta,s_0}$ centered at $s_0$ which is a formal solution to this system, normalized by the condition $Y_{G,\beta,s_0}(x(s_0))=Y_{G,\beta,s_0}(0)=I_{6}$ (the identity matrix).

In fact, we require one such matrix $Y_{G,\beta,P}$ for all $P$ in the set of zeroes of the local parameter $x$. We suppress this technical issue for the sake of expositional simplicity, see \cite{daworrpap} for more details. To establish the height bounds in \Cref{heightboundintro}, the crucial input is given by the relations constructed in the next section and their properties, i.e. their ``non-triviality''. For further details, see the proof of the height bounds in \cite{papaspadicpart1}.

\subsubsection{G-functions and periods}\label{section:gfunsandperiodsbackground}

As above, we work in the simplified setting where $s_0$ is the only root of $x$. For notational simplicity we also write $Y_{G,\beta}:=Y_{G,\beta,s_0}$. Given a place $v\in \Sigma_K$ we write $\Delta_v$ for a sufficiently small analytic disk embedded in $S^{an}_v$ and centered at $s_0$. 

If $v\in \Sigma_{K}$ is a place over which the central fiber $X_0$ does not have bad reduction, there is a natural connection between the ``$v$-adic value'' of $\iota_v(Y_{G,\beta}(x(s)))$ and the ``$v$-adic periods'' of the fiber $X_s$ as defined in \Cref{section:periodmatrices}. Upon choosing $\Delta_v$ to be small enough we may identify the $v$-adic cohomology group $H^1_v(X_0)$ with horizontal sections of $\mathcal{M}^{\rm an}|_{\Delta_v}$. Then, for all $s\in \Delta_v$, we have \begin{equation}\label{eq:gfunsandperiods}
	\Pi_v(X_s)=\iota_v(Y_{G,\beta}(x(s)))\cdot \Pi_v(X_0).
\end{equation}

In the archimedean setting this follows from the relative version of the de Rham-Betti comparison isomorphism and was first noticed by Y. Andr\'e in \cite{andre1989g}. For finite places a similar description holds when one replaces Betti cohomology by crystalline cohomology. This is due to work of Berthelot-Ogus \cite{bertogus} and Ogus \cite{ogussolo}. For places $v$ over primes unramified in the extension $K/\Q$ this observation is again due to Y. Andr\'e, see \cite{andremots}. The description in \eqref{eq:gfunsandperiods} for the remaining finite places follows by the same argument as in Andr\'e's \cite{andremots} but this time using Ogus' overconvergent $F$-isocrystals from \cite{ogussolo}. For more details on this we refer the reader to \cite{daworrpap,papaspadicpart1}.

\subsubsection{Choice of bases and splittings}\label{section:derhambasis}

Choosing the above basis $\beta$ appropriately simplifies our exposition. In the setting of \Cref{mainthmzp}, we choose the fiber at $s_0$ of the basis to be compatible with \Cref{periodsdescription}.

In order to be able to talk about period matrices we also choose bases for the $v$-adic cohomology $H^1_v(X_0)$, which we identify with horizontal sections of the connection in a sufficiently small disk around $s_0$. On this side of the comparison isomorphism we will only need to choose such bases that are symplectic with respect to the Riemann bilinear form.

Assume that $X_0$ is isogenous to a threefold of the form $X_0'=E_1\times_K E_2\times_K E_3$. The one place where we will make additional assumptions about the bases of $H^1_{\crys}$ is for the bases chosen for the $H^1_{\crys}(E_i/K_v)$. These bases, after reordering, induce the basis of $H^1_v(X_0')$ alluded to in \Cref{lemmaperiods1}. In the case where $E_i$ has good ordinary reduction at $v$ we choose this basis as described in $\S 2.1.1$ of \cite{papaspadicpart2}. 

\subsubsection{$v$-adic proximity}\label{section:vadicprox}

Fix $s\in S(\bar{\Q})$ and $w\in \Sigma_{K(s)}$ with $w|v$ for some $v\in \Sigma_{K}$. We set $r_v(Y_{G,\beta}):=\min\{1,R_v(Y_{G,\beta})\}$. 

For finite places $w$ as above we have two possible notions of what it means for $s$ to be close to $s_0$ with respect to $w$. The first is that $|x(s)|_w\leq r_w(Y_{G,\beta})$, or that $x(s)$ is within a $w$-adic disk of convergence of the family $Y_{G,\beta}$. The second is that $X_0$ and $X_s$ have the same reduction modulo $w$, or in other words that $s$ and $s_0$ are $w$-adically close with respect to the $w$-adic geometry of $S$. 

The two notions of proximity coincide for all but finitely many $v\in\Sigma_{K}$, see the discussion in Chapter $X$, $\S 3.1$ of \cite{andre1989g}. Following the ideas of Andr\'e in loc. cit., there are two ways to get around this. One is to introduce an auxiliary G-function as is done in \cite{daworr4,daworrpap}. The other is to alter the basis $\beta$ by multiplying its entries with an appropriate rational function on an affine open neighborhood of $s_0$. This is the direction considered in \cite{papaspadicpart1} where we show that this choice may be done in a way that the new basis $\beta'$ is still symplectic if the original basis was so. 

\begin{lemma}\label{associatedGfunslemma}
	There exists a symplectic basis $\beta$ of sections of $H^1_{dR}(X/S)$, defined on a sufficiently small affine open neighborhood of $s_0$, for which the two notions of $v$-adic proximity coincide for all finite places $v$. In other words, $|x(s)|_w\leq r_w(Y_{G,\beta})$ if and only if $X_s$ and $X_0$ are equal modulo $w$.	
	\end{lemma}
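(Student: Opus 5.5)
We follow Andr\'e's approach (Chapter X, \S3.1 of \cite{andre1989g}): rescale an arbitrary symplectic basis by a suitable rational function, as in \cite{papaspadicpart1}.

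\textbf{Step 1: start from a symplectic basis.} On a small affine open $U\ni s_0$ where $\mathcal{M}(U)$ is free, pick a symplectic basis $\beta_0=\{\omega_1,\dots,\omega_3,\eta_1,\dots,\eta_3\}$ of $H^1_{dR}(X/S)|_U$. Extend $x$, the good local parameter, to a model $\mathcal{U}$ of $U$ over $\CO_K[1/N]$ on which $\mathcal{X}\to\mathcal{U}$ is an abelian scheme, $H^1_{dR}$ is free with basis $\beta_0$, and $x$ has a reduction.

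For $v\nmid N$, the Dwork--Katz/Christol argument used by Andr\'e gives $r_v(Y_{G,\beta_0})=1$ for almost all $v$. Since $x$ has only simple zeros defined over $K$ and $x\colon S\to\mathbb{P}^1$ is Galois, for almost all $v$ the condition $|x(s)|_w<1$ is exactly the condition that $s$ and $s_0$ reduce to the same point of $\mathcal{U}$, i.e.\ that $X_s$ and $X_0$ agree modulo $w$. This step is standard and gives the equivalence away from a finite set $T$ of places, namely those dividing $N$ together with the places where the radius or the reduction of $x$ misbehaves.

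\textbf{Step 2: fix the finitely many bad places.} For each $v\in T$ we must shrink either the radius of convergence or the proximity region. Following \cite{papaspadicpart1}, we replace $x$ by $x'=\lambda x$ with $\lambda\in\CO_K$ divisible by a suitable power of each finite $v\in T$ and a unit at all other places. This rescaling changes $Y_{G}$ to $Y_G(x'/\lambda)$, which shrinks the radii at $v\in T$ and leaves them unchanged elsewhere. If the statement requires $x$ itself to be kept fixed, we instead twist the basis: set $\beta=\{f\omega_i,f^{-1}\eta_i\}$ with $f\in K(S)$ regular and invertible on $U$. This preserves symplecticity, since $\langle f\omega_i,f^{-1}\eta_j\rangle=\delta_{ij}$.

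The Gauss--Manin matrix then changes by adding $\mathrm{diag}(f'/f\,I_3,-f'/f\,I_3)$. We choose $f$ so that its Taylor coefficients at $s_0$ grow $v$-adically at a rate forcing $R_v(Y_{G,\beta})$ at $v\in T$ to equal the $v$-adic size of the residue disk of $s_0$. For example, take $f=(1-x/\pi_T)^{\pm1}$-type factors with $\pi_T$ chosen so that its poles and zeros lie in residue classes at the boundary of the disk, and for $v\notin T$ at distance $1$, so no new issues arise.

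\textbf{Main obstacle.} The hard part is matching the radius of convergence of the G-function matrix exactly with the reduction disk at $v\in T$, in both directions. The inequality $\ge$ follows from the crystalline (Berthelot--Ogus/Ogus overconvergent) description \eqref{eq:gfunsandperiods}, since horizontal sections converge on the whole residue disk of good reduction. The reverse inequality needs the factor $f$ to produce a genuine singularity at the boundary. I would first attempt it by a direct estimate of the coefficients of $f^{\pm1}$, checking that no cancellation occurs with the original solution matrix; this works because $f^{\pm1}$ multiplies entire blocks of $Y_{G,\beta_0}$, which is invertible with $Y(0)=I_6$.
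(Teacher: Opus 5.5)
Your main route is essentially the paper's approach: keep $x$ fixed and twist a symplectic basis by $\omega_i\mapsto f\omega_i$, $\eta_i\mapsto f^{-1}\eta_i$, where $f\in K(S)$ puts a pole at the edge of the residue disk of $s_0$ for each of the finitely many bad $v$ and leaves $r_w=1$ elsewhere; this is Andr\'e's trick from Chapter X, \S3.1 of \cite{andre1989g}, done symplectically as in \cite{papaspadicpart1}, and your non-cancellation check works because $\det Y_{G,\beta_0}\equiv 1$. Discard the alternative of replacing $x$ by $\lambda x$ with $|\lambda|_v<1$: because $r_v=\min\{1,R_v\}$, the new proximity region in the old coordinate is $\{|x(s)|_v\le\min\{|\lambda|_v^{-1},R_v(Y_{G,\beta_0})\}\}$, which contains the old region, so it enlarges rather than shrinks it (also, the connection matrix transforms as $D\Gamma D^{-1}+D'D^{-1}$ with $D=\mathrm{diag}(fI_3,f^{-1}I_3)$, not by simply adding $D'D^{-1}$).
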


\begin{definition}\label{defnassociatedGfuns}
	We write $Y_{G}:= Y_{G,\beta}$ for a basis of sections that satisfies \Cref{associatedGfunslemma} and call this a \textbf{matrix of G-functions associated to the pair} $(X\rightarrow S, s_0)$.
\end{definition}

\subsection{Trivial relations}\label{section:trivialrelations}
One consequence of imposing in \Cref{associatedGfunslemma}, and hence \Cref{defnassociatedGfuns}, that the basis $\beta$ be symplectic is that it yields a simple description of the ``trivial relations'' of our family of G-functions in the vocabulary of \cite[VII.5]{andre1989g}.

\begin{prop}\label{proptrivialrelations}
	Let $Y_G\in M_6(\bar{\Q}[[x]])$ be a matrix of G-functions associated to the pair $(X\rightarrow S, s_0)$. Assume that the image of the induced morphism $\iota:S\rightarrow \mathcal{A}_3$ is a curve not contained in a proper special subvariety of $\mathcal{A}_3$.
	
	Then the $\bar{\Q}(x)$-Zariski closure $Y_G^{\zar}\subset (M_6)_{\bar{\Q}(x)}$ is cut out by the ideal $I(\SP_6)$ that defines the group $\SP_6$ as a subvariety of the variety $M_6$ of $6\times 6$ matrices.
\end{prop}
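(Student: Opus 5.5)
Since $\beta$ is symplectic, the Gauss–Manin connection preserves the Riemann form, so the connection matrix lies in $\mathfrak{sp}_6(\bar{\Q}(x))$. The solution $Y_G$ satisfies $Y_G(0)=I_6$, so $Y_G$ takes values in $\SP_6(\bar{\Q}[[x]])$. This gives the inclusion $Y_G^{\zar}\subseteq \SP_6$, i.e. $I(\SP_6)$ is contained in the ideal of relations. Concretely, with $\Omega$ the standard symplectic matrix, one has $\frac{d}{dx}(Y_G^t\Omega Y_G)=Y_G^t(\Gamma^t\Omega+\Omega\Gamma)Y_G=0$ for the connection matrix $\Gamma$. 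Together with $Y_G(0)^t\Omega Y_G(0)=\Omega$, this yields $Y_G^t\Omega Y_G=\Omega$ identically.

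For the reverse inclusion I would use differential Galois theory. The $\bar{\Q}(x)$-Zariski closure of the point $Y_G\in M_6(\bar{\Q}((x)))$ is a torsor under the differential Galois group $G_{\rm gal}$ of $(\mathcal{M},\nabla)$ over $\bar{\Q}(x)$, so its dimension equals $\dim G_{\rm gal}$. It is irreducible, because $Y_G$ is a single point with coordinates in a field. Since $\SP_6$ is irreducible of dimension $21$, it therefore suffices to show that $\dim G_{\rm gal}=21$, i.e. that $G_{\rm gal}^{\circ}=\SP_6$. After a base change to $\C$, the Zariski closure over $\bar{\Q}(x)$ can be compared with that over $\C(x)$; by the standard argument, the closure over a smaller field is only larger, so a lower bound on the dimension over $\C(x)$ suffices.

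Over $\C$, the connected component of the differential Galois group of a regular singular connection (Gauss–Manin connections are regular singular by Griffiths' theorem) equals the Zariski closure of the monodromy group, up to finite index. By André's theorem (``normality of the monodromy''), the identity component of the algebraic monodromy group is a normal subgroup of the derived generic Mumford–Tate group. Since $\iota(S)$ is not contained in a proper special subvariety, the generic Mumford–Tate group is $\mathrm{GSp}_6$, whose derived group $\SP_6$ is simple. Because $S$ is a curve whose image in $\mathcal{A}_3$ is nonconstant, the variation of Hodge structure is nonisotrivial, so the monodromy is infinite and its identity component is nontrivial. Hence the identity component of the monodromy closure is all of $\SP_6$. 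Alternatively, one could cite Deligne's semisimplicity theorem together with André's result that the monodromy closure equals the derived Mumford–Tate group when the curve is Hodge generic.

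The main obstacle is the passage from the $\C(x)$-closure to the $\bar{\Q}(x)$-closure, and the fact that we use the formal solution normalized at $s_0$ rather than an analytic fundamental matrix. I would handle this as André does in \cite[VII.5]{andre1989g}: a fundamental solution matrix generates the Picard–Vessiot extension, and the closure of the generic point is a $G_{\rm gal}$-torsor defined over $\bar{\Q}(x)$ whose geometric dimension equals $\dim G_{\rm gal}$. The latter dimension is insensitive to extending scalars from $\bar{\Q}$ to $\C$. Combining the dimension count with the first inclusion and the irreducibility of $\SP_6$ then gives $Y_G^{\zar}=\SP_6$, i.e. the ideal is exactly $I(\SP_6)$.
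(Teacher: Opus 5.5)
Your proof is correct, but it takes a different route from the paper.

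\textbf{The paper's route.} The proof there is a one-line citation. It invokes Ayoub's proof of the relative (functional) Kontsevich--Zagier period conjecture \cite{ayoub}, which says that the $\bar{\Q}(x)$-algebraic relations among relative periods are of geometric origin. It then defers to \S4.5.4 of \cite{daworrpap} to identify the resulting group with $\SP_6$.

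\textbf{Your route.} You prove the reverse inclusion directly with classical tools.
\begin{enumerate}
\item The Picard--Vessiot torsor property gives $\dim Y_G^{\zar}=\operatorname{trdeg}_{\bar{\Q}(x)}\bar{\Q}(x)(Y_G)=\dim G_{\mathrm{gal}}$.
\item The inclusion of the $\C(x)$-closure in the $\bar{\Q}(x)$-closure lets you bound the dimension from below over $\C$.
\item Schlesinger's density theorem, with regularity from Griffiths, identifies $G_{\mathrm{gal}}^{\circ}$ over $\C(x)$ with the connected algebraic monodromy group.
\item André's normality theorem, together with the simplicity of $\SP_6$ and non-isotriviality, forces that group to be $\SP_6$.
\end{enumerate}
Combined with $Y_G\in\SP_6(\bar{\Q}[[x]])$ and the irreducibility of both sides, this gives the statement.

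\textbf{Comparison.} Your argument is more elementary and self-contained. It shows exactly where Hodge genericity enters: it makes the generic Mumford--Tate group $\mathrm{GSp}_6$, whose derived group is simple, so any infinite monodromy is automatically full. The paper's citation is heavier machinery than this single-center statement needs. It is presumably chosen because it is the form used in the cited references for the general multi-center version mentioned in the remark after the proposition.

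\textbf{Minor remarks.}
\begin{itemize}
\item Your parenthetical alternative is false in general. It claims that the monodromy closure equals the derived Mumford--Tate group for every Hodge generic curve. A counterexample is $\{p\}\times Y(1)\subset Y(1)^2$ with $p$ non-CM: it is Hodge generic, but its monodromy is only $\{1\}\times \mathrm{SL}_2$. The claim holds here only because $\SP_6$ is simple, which your main argument already uses correctly.
\item The ``obstacle'' about formal versus analytic solutions is not one. The point $s_0$ is a nonsingular point of the connection, so $Y_G$ converges and is an honest fundamental matrix near $s_0$.
\item If the connection matrix only has entries in $\bar{\Q}(S)$, a finite extension of $\bar{\Q}(x)$, you should take the Picard--Vessiot group over $\bar{\Q}(S)$. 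The transcendence degree count is unchanged, so nothing else is affected.
\end{itemize}
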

\begin{proof}
This follows by J. Ayoub's affirmative answer to the relative version of the Kontsevich-Zagier period conjecture, see \cite{ayoub}. For further details, we refer the reader to $\S 4.5.4$ of \cite{daworrpap}.
\end{proof}
\begin{remark}
	Strictly speaking, we require a statement closer to $3.12$ in \cite{papaspadicpart1}. The statement there accounts for the possibility that the local parameter $x$ can have many zeroes. We suppress this subtlety for expositional simplicity. This is especially because the more involved case, that is when $x$ has multiple zeroes, reduces to the case considered here, at least for the purpose of establishing the ``non-triviality'' of the relations.
\end{remark}

\section{Relations among values of G-functions}\label{section:relations}

In this section, we construct relations among the $v$-adic values of our G-functions at points $s$ as appearing in \Cref{mainthmzp}. We do this on a case-by-case basis depending on the type of the place $v$.
\subsection{Setting}\label{section:relationssetting}
We fix from now on a family of principally polarized abelian threefolds $f: A \rightarrow S$ defined over a number field $K$ together with a point $s_{0} \in S(K)$ for which $A_{0}:=A_{s_{0}}$ is isogenous to a threefold of the form $A_{0}^{\prime}:=E_{1} \times_K E_{2} \times_K E_{3}$. We also work under the assumptions that $E_{1}$ is CM and $\left(E_{2}, E_{3}\right)$ is a Hodge generic point in $Y(1)^{2}$. We shall also write $\phi: A_{0} \rightarrow A_{0}^{\prime}$ for the aforementioned isogeny.

We assume that our family is as in the ``simplified setting'' described in \Cref{section:backgroundgfuns}. In this sense, from now on, we denote by $Y_{G} \in \SP_6(K[[x]])$ an associated matrix of $G$-functions centered at $s_{0}$.

From now on we write $\Sigma_{K,G}$ for the union of the set of archimedean places $\Sigma_{K, \infty}$ and the set $\Sigma_{K,\good}(A_0)$ of places of good reduction of $A_{0}$. Given a finite extension $L / K$ we also let
$$
\Sigma_{L, G}:=\left\{v \in \Sigma_{L}: \exists w \in \Sigma_{K, G} \text { with } v \mid w\right\}.
$$

Throughout this section we also fix $s \in S(\bar{\Q})$ as in \Cref{mainthmzp}. In particular there exists an isogeny
$$
\psi: A_{s} \longrightarrow A_{s}^{\prime}=E_{s}^{3},
$$
for some elliptic curve $E_s$, defined over $\bar{\Q}$. In fact, it is classical that there exists a finite extension $L_{s} / K(s)$ over which $\psi$ is defined and that, see \cite{silverberg} for example, $[L_s: K(s)]$ may be bounded by an absolute constant.

With the above in mind we assume from now on that $L_{s}=K(s)$. We also fix from now on $v \in \Sigma_{K(s), G}$ a place with respect to which $s$ is $v$-adically close to $s_{0}$.

To simplify our exposition, we also work under the following assumptions.
\begin{enumerate}
	\item $F_0:=\End^{0}_{\bar{\Q}}\left(E_{1}\right)=\End^{0}_{K}\left(E_{1}\right)$ and 
	\item $A_{0}$, and hence $E_{i}$ for $1 \leq i \leq 3$, have everywhere semi-stable reduction.
\end{enumerate}

The nature of the points we are trying to count, as well as the assumptions on $A_{0}$, allow us to give a concrete description of the possible types of places of $\Sigma_{K(s)}$ with respect to which $s$ is close to $s_0$.

\begin{lemma}\label{lemmatypesofproximity}
 Let $v \in \Sigma_{K(s)}$ with $v \mid w \in \Sigma_{K}$ be a place for which $s$ is $v$-adically close to $s_{0}$. Then 
\begin{enumerate}
	\item $w \in \Sigma_{K,\infty}$, or
	\item $w\in \Sigma_{K,f}$ and $\tilde{E}_{i, w}$ is an ordinary elliptic curve for $1 \leq i \leq 3$, or
	\item $w\in \Sigma_{K,f}$ and $\tilde{E}_{i, w}$ is a supersingular elliptic curve for $1 \leq i \leq 3$.\end{enumerate}\end{lemma}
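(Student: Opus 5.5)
Assume $v$ is finite, so $w\in\Sigma_{K,f}$; the archimedean case is alternative (1). The plan is to compare the reductions of $A_s$ and $A_0$ modulo $v$ and use the rigidity of the ordinary/supersingular dichotomy for elliptic curves.

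\textbf{Step 1: $A_s$ and $A_0$ have the same reduction at $v$.} Since $v\in\Sigma_{K(s),G}$, the place $w$ is a place of good reduction of $A_0$. By \Cref{associatedGfunslemma}, $v$-adic proximity of $s$ to $s_0$ means that $A_s$ and $A_0$ coincide modulo $v$. In particular $A_s$ has good reduction at $v$, and $\tilde{A}_{s,v}\cong \tilde{A}_{0,v}$ over the residue field $k_v$ (or over $\bar{k}_v$, which is all we need).

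\textbf{Step 2: pass to isogenous products.} Isogenies reduce to isogenies at places of good reduction. The isogeny $\phi$ is defined over $K$, so $\tilde{A}_{0,v}$ is isogenous to $\tilde{E}_{1,w}\times\tilde{E}_{2,w}\times\tilde{E}_{3,w}$. Here each $E_i$ has good reduction at $w$, since $A_0$ does and good reduction is an isogeny invariant by N\'eron–Ogg–Shafarevich. Likewise, $\psi$ is defined over $K(s)=L_s$, so $\tilde{A}_{s,v}$ is isogenous to $\tilde{E}_{s,v}^3$, and $E_s$ has good reduction at $v$ because $E_s^3$ does. Combining with Step 1, over $\bar{k}_v$ we obtain
\[
\tilde{E}_{1,w}\times\tilde{E}_{2,w}\times\tilde{E}_{3,w}\sim \tilde{E}_{s,v}^3 .
\]

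\textbf{Step 3: read off the types.} By Poincaré reducibility over $\bar{k}_v$ together with the Krull–Schmidt-type uniqueness of isotypic decomposition up to isogeny, each $\tilde{E}_{i,w}$ is isogenous to $\tilde{E}_{s,v}$. Alternatively, compare $p$-rank: the $p$-rank of the left side is the number of ordinary $\tilde E_{i,w}$, and that of the right side is $0$ or $3$. Being ordinary or supersingular is an isogeny invariant of elliptic curves. Hence either all three $\tilde{E}_{i,w}$ are ordinary, which is (2), or all are supersingular, which is (3).

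\textbf{Main obstacle.} The only delicate point is Step 1, namely that ``close'' genuinely gives equality of reductions, including good reduction of $A_s$ at $v$. This is exactly what \Cref{associatedGfunslemma} provides, together with $v\in\Sigma_{K(s),G}$. The rest is standard. The $p$-rank argument is the cleanest route because it avoids any field-of-definition issues for the reduced isogenies.
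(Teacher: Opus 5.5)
\textbf{The gap: places of bad reduction of $A_0$.} For places $v$ lying over a place $w$ of good reduction of $A_0$, your argument is correct and is essentially the paper's. The paper notes that each $\tilde E_{i,v}$ is isogenous to $\tilde E_{s,v}$ and hence all have the same type; your $p$-rank count is an equivalent way of saying this.

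The gap is at the start of Step 1, where you take $v\in\Sigma_{K(s),G}$. The setting paragraph does fix such a $v$, but the lemma itself is stated for arbitrary $v\in\Sigma_{K(s)}$. It is also used that way in the proof of \Cref{mainhtbound}. There $\Sigma(s)$ consists of \emph{all} places of proximity, and the decomposition $\Sigma(s)=\Sigma(s)_\infty\cup\Sigma(s)_{\ord}\cup\Sigma(s)_{\ssing}$ needs to know that no place above a place of bad reduction of $A_0$ (equivalently, of $E_2$ or $E_3$) is a place of proximity. Conclusions (2) and (3) implicitly assert that all $E_i$ have good reduction at $w$, so this exclusion is part of the lemma's content. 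It is exactly where the hypothesis that $E_1$ is CM is needed, and your proof never uses that hypothesis. Compare the remark after \Cref{mainthmzp}: the CM assumption ``ensures that places of bad reduction cannot occur as places of proximity.''

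\textbf{The missing step.} The paper obtains it from Serre--Tate.
\begin{enumerate}
\item Since $A_0$, and hence each $E_i$, is everywhere semistable, and $E_1$ is CM and so has potentially good reduction, $E_1$ has good reduction at every finite place.
\item Now let $v\mid w$ be any finite place of proximity, so that $\tilde A_{s,v}=\tilde A_{0,v}$ as identity components of N\'eron special fibres.
\item Isogenies induce isogenies on these identity components, so toric rank is an isogeny invariant.
\item For $A_0\sim E_1\times E_2\times E_3$, the toric rank equals the number of $E_2,E_3$ with multiplicative reduction at $w$, so it is at most $2$.
\item For $A_s\sim E_s^3$, the toric rank is $0$ or $3$.
\end{enumerate}
Hence the toric rank is $0$. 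By semistability, $E_2$ and $E_3$ then have good reduction at $w$, so $w$ is a place of good reduction of $A_0$, and your Steps 1--3 apply verbatim.
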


\begin{proof}
	Assume $w \notin \Sigma_{K,\infty}$. By our conventions we have that $\tilde{A}_{s, v}=\tilde{A}_{0, v}$. In particular, $\tilde{E}_{s,v}^{3}$ is isogenous to $\tilde{E}_{1, v} \times_{k(v)} \tilde{E}_{2, v} \times_{k(v)} \tilde{E}_{3, v}$ and thus $\tilde{E}_{i, v}$ is isogenous to $E_{s, v}$ for $1 \leq i \leq 3$. Since $A_{0}$ has everywhere semi-stable reduction and $E_{1}$ is CM, $E_{1}$ will have everywhere good reduction, see \cite{serretate}. Finally, all $\tilde{E}_{i, v}$ are isogenous and will thus have the same type of reduction.
\end{proof} 

\begin{remark}
	For $s\in S(\bar{\Q})$ as above, we write $Y_G(s):=Y_G(x(s))$ for the remainder of this section.
\end{remark}

\subsection{The relation generator}\label{section:relgenerator}
To make our notation a bit more compact we will write
\begin{equation}
	\diag\Pi_v(E_i)=\begin{pmatrix}\Pi_v(E_1)&0&0\\0&\Pi_v(E_2)&0\\0&0&\Pi_v(E_3)	\end{pmatrix}, 
\end{equation}to denote the obvious block-diagonal matrices. When $E=E_1=E_2=E_3$ we write simply $\diag\Pi_v(E)$. Also, following usual notational conventions, given an isogeny $\phi:A\rightarrow B$ of abelian varieties we write $\phi^{\vee}$ for its dual.

The main equation we use to generate relations is given by the following:
\begin{lemma}\label{lemmarelgenerator}
	Let $\left(f: A \rightarrow S, s_{0}\right)$, $s \in S(\bar{\Q}), v \in \Sigma_{K(s),G}$ and $\psi: A_{s} \rightarrow A_{s}^{\prime}=E_s^3$ be as above. Then, 
	\begin{equation}\label{eq:relgenerator}
	\iota_{v}\left(J^{-1}[\psi]_{d R} Y_{G}(s)\left[\phi^{\vee}\right]_{dR} J\right)=\diag\Pi_{v}\left(E_{s}\right) \Theta_{v}  \diag \Pi_{v}\left(E_{i}\right)^{-1},
	\end{equation}where $\Theta_{v}=J^{-1} [\psi]_{v}  [\phi^{\vee}]_{v}  J$.
\end{lemma}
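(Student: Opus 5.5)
The plan is to combine three period relations: the comparison \eqref{eq:gfunsandperiods} between values of $Y_G$ and periods, and the compatibility of period matrices with the two isogenies $\psi$ and $\phi^{\vee}$ as in \Cref{lemmaperiods2}(1). Then we rewrite the period matrices of the split threefolds via \Cref{lemmaperiods1}. All identities are taken after applying $\iota_v$, with $v\in\Sigma_{K(s),G}$ a place of proximity.

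\textbf{Step 1.} Since $s$ is $v$-adically close to $s_0$, \Cref{associatedGfunslemma} gives $|x(s)|_v\le r_v(Y_G)$. So \eqref{eq:gfunsandperiods} applies:
\[
\Pi_v(A_s)=\iota_v(Y_G(s))\,\Pi_v(A_0).
\]
Here $H^1_v(A_s)$ is identified with $H^1_v(A_0)$ through horizontal sections on $\Delta_v$. In the crystalline case this identification uses $\tilde A_{s,v}=\tilde A_{0,v}$.

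\textbf{Step 2.} Apply the isogeny compatibility of \Cref{lemmaperiods2}(1) to $\psi:A_s\to E_s^3$. Its proof (functoriality of the Grothendieck and Berthelot--Ogus comparison isomorphisms) does not use the specific target, so it holds for arbitrary isogenies. This gives
\[
[\psi]_{dR}\Pi_v(A_s)=\Pi_v(E_s^3)[\psi]_v.
\]
Similarly, apply it to the dual isogeny $\phi^{\vee}$. Here $A_0$ and $A_0'$ are identified with their duals via the principal polarizations; for $A_0'$ this is the product polarization. Orienting the matrices as in the statement, this yields
\[
\Pi_v(A_0)[\phi^{\vee}]_v=[\phi^{\vee}]_{dR}\Pi_v(A_0').
\]

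\textbf{Step 3.} Combining Steps 1 and 2 gives
\[
[\psi]_{dR}Y_G(s)[\phi^{\vee}]_{dR}\Pi_v(A_0')=[\psi]_{dR}Y_G(s)\Pi_v(A_0)[\phi^\vee]_v=\Pi_v(E_s^3)[\psi]_v[\phi^{\vee}]_v,
\]
and hence
\[
[\psi]_{dR}Y_G(s)[\phi^{\vee}]_{dR}=\Pi_v(E_s^3)[\psi]_v[\phi^{\vee}]_v\Pi_v(A_0')^{-1}.
\]
Now choose the Hodge and $v$-adic bases of $E_s^3$ and $A_0'$ as in \Cref{lemmaperiods1}, compatibly with \Cref{periodsdescription} and \Cref{section:derhambasis}. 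Then
\[
\Pi_v(E_s^3)=J\,\diag\Pi_v(E_s)\,J^{-1},\qquad \Pi_v(A_0')=J\,\diag\Pi_v(E_i)\,J^{-1}.
\]
Substituting these and conjugating both sides by $J$, i.e. multiplying on the left by $J^{-1}$ and on the right by $J$, gives exactly \eqref{eq:relgenerator}, with $\Theta_v=J^{-1}[\psi]_v[\phi^\vee]_vJ$.

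\textbf{Main obstacle.} I expect the only real difficulty to be bookkeeping of conventions. One must check that the pullback matrices of $\phi^{\vee}$ in de Rham and $v$-adic cohomology are oriented so that $[\phi^\vee]_{dR}$ and $[\phi^\vee]_v$ appear without inverses. This is where the dual isogeny is used instead of $\phi$: up to the degree factor $\phi^\vee\circ\phi=[\deg\phi]$, it converts the relation for $\phi$, which involves $[\phi]^{-1}$, into one with $[\phi^\vee]$ on the correct side. One must also check that the bases of $H^1_v(A_s)$ obtained by parallel transport are the ones implicit in $[\psi]_v$. If the orientation turns out reversed, I would first derive the relation for $\phi$ and then replace $[\phi]^{-1}$ by $(\deg\phi)^{-1}[\phi^\vee]$ in both cohomologies, using that the scalar cancels.
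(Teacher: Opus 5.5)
Your proof is correct and follows the paper's argument: combine \eqref{eq:gfunsandperiods} with the isogeny compatibility of \Cref{lemmaperiods2}(1) for $\psi$, rewrite $\Pi_v(E_s^3)$ and $\Pi_v(A_0')$ via \Cref{lemmaperiods1}, and conjugate by $J$. The one difference is how $\phi^{\vee}$ enters: the paper applies the compatibility to $\phi$ itself, obtaining $[\phi]_{dR}^{-1}$ and $[\phi]_v^{-1}$, and then converts using $[\phi^{\vee}]_*=\deg(\phi)\,[\phi]_*^{-1}$; you instead apply functoriality directly to $\phi^{\vee}\colon A_0'\to A_0$, which is equally valid and does not need $\phi^{\vee}\circ\phi$ to be multiplication by an integer.
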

\begin{proof}
	 This follows by the same argument as Lemma $4.1$ in \cite{papaspadicpart1}. In short, from \Cref{lemmaperiods2} we have
	$$
	[\psi]_{dR} \Pi_{v}\left(A_{s}\right)=\Pi_{v}\left(A_{s}^{\prime}\right) [\psi]_{v} .
	$$
	
	Noting that $\Pi_{v}\left(A_{s}\right)=\iota_{v}\left(Y_{G}(s)\right)  \Pi_{v}\left(A_{0}\right)$ and applying \Cref{lemmaperiods1} once for $A_{s}^{\prime}$ and once for $A_{0}^{\prime}$, after using  \Cref{periodsdescription} for $A_0$, we get
	\begin{equation}
		\iota_{v}\left(J^{-1} [\psi]_{d R} Y_{G}(s) [\phi]_{d R}^{-1} J\right)=\diag\Pi_{v}(E_s)  \Theta_{v}  \diag \Pi_{v}\left(E_{i}\right)^{-1},
	\end{equation}
	where $\Theta_{v}:=J^{-1}[\psi]_{v}[\phi]_{v}^{-1} J$. Noting that $[\phi^{\vee}]_{*}=\operatorname{deg}( \phi) \cdot[\phi]_{*}^{-1}$, for $* \in\{d R, v\}$, we are done.\end{proof}

The matrix $\Theta_{v}$ captures further geometric information. We encode this in the following:
\begin{lemma}\label{matrixtheta}
	Let $v \in \Sigma_{K, G}$ and $s \in S(K)$ be as in \Cref{section:relationssetting} and $\Theta_v$ be the matrix defined in \Cref{lemmarelgenerator}.
	
1. If $v \in \Sigma_{K(s), \infty}$ then $\Theta_{v} \in G L_{6}(\mathbb{Q})$.

2. If $v \in \Sigma_{K(s),f}$, then for $1 \leq i, j \leq 3$ there exists a morphism $\tilde{\theta}_{i, j}: \tilde{E}_{j, v} \rightarrow \tilde{E}_{s, v}$ such that $\Theta_{v}=\left(\theta_{i, j}\right)$ where $\theta_{i, j}$ denotes the matrix associated to the pullback of $\tilde{\theta}_{i j}$ in crystalline cohomology. 

3. If $v \in \Sigma_{K(s),f}$ and $v$ is ordinary for the $E_{i}$ then for $1 \leq i, j \leq 3$ there exist $\xi_{i j}$, $\zeta_{ij} \in \bar{\Q}$ such that $\theta_{i j}=\begin{pmatrix}\xi_{i j} & 0 \\ 0 & \zeta_{i j}\end{pmatrix}$.
\end{lemma}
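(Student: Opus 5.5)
We treat the three parts separately, starting from the definition $\Theta_v=J^{-1}[\psi]_v[\phi^{\vee}]_vJ$. The key observation is that $\psi\circ\phi^{\vee}$-type compositions, after identifying cohomology of $A_s$ and $A_0$ via the proximity of $s$ and $s_0$, give a correspondence between $A_0'=E_1\times E_2\times E_3$ and $A_s'=E_s^3$. More precisely, $[\psi]_v[\phi^{\vee}]_v$ is the matrix of the pullback along a map $H^1_v(A_0')\to H^1_v(A_0)\cong H^1_v(A_s)\to H^1_v(E_s^3)$. Conjugating by $J$ only reorders bases from the ``$\omega$'s then $\eta$'s'' ordering to the block ordering adapted to the product decomposition, as recorded in the proof of \Cref{lemmaperiods1}. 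In the block ordering, a morphism between products of elliptic curves decomposes into $2\times2$ blocks indexed by pairs of factors.

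For part 1, we use that $v$ archimedean means $H^1_v$ is Betti cohomology with $\Q$-structure. The identification $H^1_v(A_0)\cong H^1_v(A_s)$ comes from parallel transport of the local system in the disk $\Delta_v$, and this is defined over $\Q$ (even $\Z$). Since $\psi$ and $\phi^{\vee}$ are isogenies, their pullbacks preserve rational Betti cohomology and are invertible over $\Q$. With rational symplectic bases chosen as in \Cref{section:derhambasis}, all factors therefore lie in $\GL_6(\Q)$, and since $J\in\GL_6(\Z)$ so does $\Theta_v$.

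For part 2, proximity of $s$ and $s_0$ at a finite $v$ means $\tilde A_{s,v}=\tilde A_{0,v}$ (\Cref{associatedGfunslemma}). The identification of crystalline cohomologies used in \eqref{eq:gfunsandperiods} is then the identity on $H^1_{\crys}$ of this common reduction. Thus $[\psi]_v[\phi^{\vee}]_v$ is the crystalline pullback along the isogeny $\tilde\psi\circ\tilde\phi^{\vee}:\tilde E_{1,v}\times\tilde E_{2,v}\times\tilde E_{3,v}\to\tilde A_{0,v}=\tilde A_{s,v}\to\tilde E_{s,v}^3$. Here we use that reduction commutes with duals and that crystalline cohomology is functorial. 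Any morphism between products of elliptic curves is a $3\times3$ matrix of morphisms $\tilde\theta_{i,j}:\tilde E_{j,v}\to\tilde E_{s,v}$, and functoriality gives the block decomposition $\Theta_v=(\theta_{i,j})$ once bases are ordered factorwise via $J$.

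For part 3, ordinarity of the $\tilde E_{j,v}$ forces $\tilde E_{s,v}$ to be ordinary too, since it is isogenous to them (\Cref{lemmatypesofproximity}). Each $\tilde\theta_{i,j}$ is then a morphism of ordinary curves. We use the bases of $\S2.1.1$ of \cite{papaspadicpart2}, which are adapted to the unit-root splitting of $H^1_{\crys}$ into Frobenius slope $0$ and slope $1$ parts, with the Hodge filtration compatible via the canonical lift. Any morphism commutes with Frobenius, so its pullback preserves the slope decomposition and is diagonal in such bases. Its entries are algebraic because they come from the action of an element of $\Z$ or of an imaginary quadratic order on the rank-one pieces: via Serre--Tate, each morphism lifts to the canonical lifts and acts on the invariant differential by an algebraic number. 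This last step, producing diagonal entries that lie in $\bar\Q$ and match the chosen bases, is the main obstacle. I would handle it by citing the corresponding computation in \cite{papaspadicpart2}, applied factorwise.
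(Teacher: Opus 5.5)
Your proposal is correct and follows essentially the same route as the paper. The paper proves part 1 by functoriality of the de Rham--Betti comparison: rational Betti bases, with parallel transport preserving the $\Q$-structure. It proves parts 2 and 3 via the choice of crystalline bases, citing Lemma 4.3 of \cite{papaspadicpart1} and Lemma 2.4 of \cite{papaspadicpart2}; you simply unpack those citations in more detail.

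One cosmetic slip: in your first paragraph the cohomology arrows point the wrong way. Pulling back along $\phi^{\vee}\colon A_0'\to A_0$ and $\psi\colon A_s\to E_s^3$ gives $H^1_v(E_s^3)\to H^1_v(A_s)\cong H^1_v(A_0)\to H^1_v(A_0')$, which is the direction you correctly use in part 2.
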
 

\begin{proof}The first part follows trivially from functoriality in the de Rham Betti comparison isomorphism.
	
	The second and third part follow from our choice of bases for $H^{1}_{\crys}$. See Lemma $4.3$ of \cite{papaspadicpart1} and Lemma $2.4$ of \cite{papaspadicpart2} respectively for more details on those.
\end{proof} 
From now on we set
\begin{equation}\label{eq:thematrixz}
	Z=\left(Z_{i, j}\right):=\iota_{v}\left(J^{-1}[\psi]_{d R} Y_{G}(s)\left[\phi^{\vee}\right]_{d R} J\right).\end{equation}

\subsection{Archimedean and supersingular places}\label{section:relarchandsuper}

We start with the archimedean places and then proceed to the places for which the $E_i$ have supersingular reduction. The construction of relations for these places share significant similarity. We emphasize that these relations will depend on the place $v$.
\begin{prop}\label{proparchimedean}
Assume $v \in \Sigma_{K(s), \infty}$. Then there exists $R_{s, v} \in K(s)\left[X_{i j}: 1 \leq i, j \leq 6\right]$ such that
	\begin{enumerate}
		\item $\iota_v\left(R_{s, v}\left(Y_{G}(s)\right)\right)=0$,
		\item $R_{s, v}$ is homogeneous of degree $2$, and
		\item $R_{s,v} \notin I\left(\SP_6\right)$.
	\end{enumerate}
\end{prop}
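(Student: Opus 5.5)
The plan is to read off a quadratic relation from the block structure of \eqref{eq:relgenerator}. At an archimedean place we know two things: $\Theta_v\in\GL_6(\Q)$ by \Cref{matrixtheta}, and all the period matrices of the elliptic curves involved are $2\times 2$ matrices with controlled determinants. Write $Z=(Z^{(k,l)})_{1\le k,l\le 3}$ in $2\times2$ blocks, and similarly $\Theta_v=(\theta_{kl})$ with $\theta_{kl}\in M_2(\Q)$. Then \eqref{eq:relgenerator} reads blockwise
\[
Z^{(k,l)}=\Pi_v(E_s)\,\theta_{kl}\,\Pi_v(E_l)^{-1}.
\]

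\textbf{Step 1: block determinants.} For any elliptic curve $E$ over a number field with a symplectic de Rham basis and a symplectic Betti basis, the Legendre period relation gives $\det\Pi_v(E)=(2\pi i)^{-1}$. This is consistent with the normalization in \Cref{lemmaperiods1}, where for $E_1$ the determinant is $\frac{\varpi_v}{2\pi i}\varpi_v^{-1}$. Taking determinants of the block identity therefore gives, for all $k,l$,
\[
\det Z^{(k,l)}=\det\Pi_v(E_s)\cdot\det\theta_{kl}\cdot\det\Pi_v(E_l)^{-1}=\det\theta_{kl}\in\Q .
\]
Each $\det Z^{(k,l)}$ is a quadratic form in the entries of $Y_G(s)$, because $Z=\iota_v(MY_G(s)N)$ with $M=J^{-1}[\psi]_{dR}$ and $N=[\phi^\vee]_{dR}J$ constant matrices with entries in $K(s)$.

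\textbf{Step 2: homogenize.} There are two cases.
\begin{itemize}
\item If some $\det\theta_{kl}=0$, set $R_{s,v}(X):=\det\bigl((MXN)^{(k,l)}\bigr)$. This is already homogeneous of degree $2$.
\item Otherwise pick two distinct blocks $(k,l)\neq(k',l')$ and set
\[
R_{s,v}(X):=\det\theta_{k'l'}\,\det\bigl((MXN)^{(k,l)}\bigr)-\det\theta_{kl}\,\det\bigl((MXN)^{(k',l')}\bigr).
\]
\end{itemize}
In both cases $R_{s,v}$ has coefficients in $K(s)$ and satisfies $\iota_v(R_{s,v}(Y_G(s)))=0$. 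This settles (1) and (2).

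\textbf{Step 3: non-triviality, the main obstacle.} We must show $R_{s,v}\notin I(\SP_6)$, that is, that $g\mapsto R_{s,v}(g)$ does not vanish identically on $\SP_6$. Substituting $g'=MgN$, this amounts to finding $g'\in M\,\SP_6\,N$ on which the chosen combination of block determinants is nonzero. The difficulty is that the translate $M\,\SP_6\,N$ is only $21$-dimensional inside $M_6$, so we cannot simply invoke openness.

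To handle it, I would exploit the structure of $M$. By \Cref{lemmaperiods2}, $[\psi]_{dR}=\begin{pmatrix}I_3&0\\B'&C'\end{pmatrix}$, and $[\phi^\vee]_{dR}$ has the analogous shape up to the factor $\deg\phi$. Conjugating by $J$ turns these into block-compatible lower-triangular matrices. It then suffices to exhibit explicit elements of $\SP_6$ for which the relevant block determinant(s) behave as required:
\begin{itemize}
\item block-diagonal elements $\diag(h_1,h_2,h_3)$ with $h_i\in\SP_2=\mathrm{SL}_2$, in the reordered coordinates;
\item elements mixing two $\mathrm{SL}_2$-factors, for example rotations in the symplectic plane spanned by $\omega_k,\omega_{k'},\eta_k,\eta_{k'}$.
\end{itemize}
In the first case we want the determinant to be nonzero. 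In the second we want one block determinant to vanish while the other does not. Doing this uniformly in the unknown $B',C'$ and $\theta$'s is where real work is needed. I would reduce it to a polynomial identity in finitely many parameters and, if necessary, verify it by a symbolic computation in the spirit of \Cref{section:appendixmathematica}.
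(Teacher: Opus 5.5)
Your Steps 1 and 2 are correct. Your homogenization is a legitimate variant of the paper's. The paper pairs $\det F_{11}=d_v$ with the trivial relation $g(Y_G)=1$, where $g$ is the symplectic pairing of the first and fourth columns (an entry of $X^T\mathbb{J}X=\mathbb{J}$). You instead take a $\Q$-linear combination of two block determinants.

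The genuine gap is Step 3. Condition (3) is the real content of the proposition, and you do not prove it. You describe a search for test elements of $\SP_6$ and concede that doing this uniformly in the unknown isogeny data is ``where real work is needed''. As written, nothing excludes that your chosen combination vanishes identically on $M\,\SP_6\,N$.

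Moreover, the structure you plan to exploit is not available. \Cref{lemmaperiods2}(2) lets you normalize the de Rham basis of the \emph{source} of an isogeny. For $\psi\colon A_s\to E_s^3$, however, that basis is forced to be the fiber of $\beta$ at $s$, since this is what makes $\Pi_v(A_s)=\iota_v(Y_G(s))\Pi_v(A_0)$ hold. Hence the top-left block of $[\psi]_{dR}$ is some uncontrolled $A_s\in\GL_3(\bar{\Q})$, not $I_3$.

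The paper handles exactly this. It reduces its relation modulo a Gr\"obner basis of $I(\SP_6)$ with $A_s,B_s,C_s$ kept symbolic. Among the remainder's coefficients are $d_{11}d_{32}-d_{12}d_{31}$, $d_{11}d_{33}-d_{13}d_{31}$ and $d_{12}d_{33}-d_{13}d_{32}$. These form the second column of $\adj(A_s)$, so they cannot all vanish.

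Your own relation can be shown non-trivial without computation, and that is one way to fill the gap. Each block determinant of $MXN$ has the form $X\mapsto u(\wedge^2X\,w)$. Here $u$ is the wedge of the two relevant rows of $M$ and $w$ is the wedge of the two relevant columns of $N$. As an $\SP_6$-module, $\wedge^2$ of the standard representation splits as a trivial line plus an irreducible nontrivial $14$-dimensional $W$. By Burnside's theorem, such a matrix coefficient on $\SP_6$ determines its constant part and the tensor $u_W\otimes w_W$ of $W$-components.

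A decomposable $2$-form has rank $2$, while the invariant form has rank $6$, so $u_W\neq 0$. A combination of two decomposable forms has rank at most $4$. Hence two distinct row pairs of the invertible $M$, or two distinct column pairs of $N$, give non-proportional $W$-components. It follows that any two distinct block determinants are linearly independent functions on $\SP_6$, and each is nonzero there, which covers both of your cases. Some argument of this kind, or an explicit computation as in the paper, is what your proposal is missing.
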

\begin{proof}
	We write the matrix $Z$ of \eqref{eq:thematrixz} as $Z=\left(F_{i j}\right)$ where $F_{i j}$ are $2 \times 2$ matrices.
	
	From \Cref{lemmarelgenerator} and \Cref{matrixtheta} we get	
	$$
	F_{11}=\Pi_{v}\left(E_{s}\right) \cdot \theta_{11} \cdot \Pi_{v}\left(E_{1}\right)^{-1}.
	$$
	Noting that $\det \Pi_{v}\left(E\right)=(2 \pi i)^{-1}$ for the period matrices of all elliptic curves that appear here and taking determinants in the above, we get
	$$
	\det F_{11}=\det \theta_{11}=: d_{v} .
	$$
	
	By \Cref{matrixtheta} we know $d_{v} \in \mathbb{Q}$. Consider the polynomial
	$$g(\underline{X})=X_{11}X_{44}-X_{14}X_{41}+X_{21}X_{54}-X_{24}X_{51}+X_{31}X_{64}-X_{34}X_{61}$$ 
	and note that $g(\underline{X})-1 \in I\left(\SP_6\right)$. In particular, from \Cref{proptrivialrelations} we get $g\left(Y_{G}(x)\right)=1$ on the level of power series.
	
	All in all, we get the relation
\begin{equation}\label{eq:archrelation}
	\det F_{11}-d_{v} \cdot \iota_{v}\left(g\left(Y_{G}(s)\right)\right)=0 .
\end{equation}
	Let us write $R_{s, v}$ for the polynomial obtained from (\ref{eq:archrelation}) by replacing the $ij$ entry of $Y_G(s)$ by the variable $X_{ij}$. Then $R_{s,v }$ satisfies all but the last property we want.
	
	Assume to the contrary that $R_{s,v} \in I\left(\SP_6\right)$.	The code in \Cref{section:apparchimedean} outputs the coefficients and monomials of the remainder of the division of $R_{s,v}$ by a Gr\"obner basis of $I\left(\SP_6\right)$. Since $R_{s,v} \in I\left(\SP_6\right)$ the coefficients of all the monomials of this remainder must be zero. We write $c\left(\underline{X}^{\underline{\alpha}}\right)$ for the coefficient corresponding to the monomial $\underline{X}^{\underline{\alpha}}$.
	
	Since all bases of $H_{d R}^{1}$ are Hodge bases by assumption, the matrix $[\psi]_{d R}$ is lower triangular of the form
	$$
	\begin{pmatrix}
		A_{s} & 0 \\
		B_{s} & C_{s}
	\end{pmatrix},
	$$
	where $A_{s}$, $B_{s}$, and $C_{s}$ are $3 \times 3$ matrices. Since $\psi$ is an isogeny we also get $A_{s}$, $C_{s} \in \GL_{3}(\bar{\Q})$. Setting $A_{s}=\left(d_{i j}\right)$ the code in \Cref{section:apparchimedean} outputs
	$$
	\begin{aligned}
		& c\left(X_{11} X_{23}\right)=d_{11} d_{32}-d_{12} d_{31}=0, \\
		& c\left(X_{11} X_{33}\right)=d_{11} d_{33}-d_{13} d_{31}=0, \text { and } \\
		& c\left(X_{21} X_{33}\right)=d_{12} d_{33}-d_{13} d_{32}=0 .
	\end{aligned}
	$$
	
	These would imply that the second column of the adjugate of $A_{s}$ is zero. This clearly contradicts $A_{s} \in \GL_{3}(\bar{\Q})$.\end{proof}

\begin{prop}\label{propsupersing}
	Assume $v \in \Sigma_{K(s),f}$ is a place of supersingular reduction of $E_{i}$ for $1 \leq i \leq 3$.
	
	There exists $R_{s, v} \in K(s)\left[X_{i j}: 1 \leq i, j \leq 6\right]$ such that
	\begin{enumerate}
		\item $\iota_v\left(R_{s, v}\left(Y_{G}(s)\right)\right)=0$,
		\item $R_{s, v}$ is homogeneous of degree $2$, and
		\item $R_{s, v} \notin I\left(\SP_6\right)$.
	\end{enumerate}
\end{prop}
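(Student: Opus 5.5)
The approach is to mirror the proof of \Cref{proparchimedean} as closely as possible, using the crystalline comparison in place of the Betti one. Write $Z=(F_{ij})$ in $2\times 2$ blocks as before. Combining \Cref{lemmarelgenerator} with \Cref{matrixtheta}(2) gives
$$F_{11}=\Pi_v(E_s)\,\theta_{11}\,\Pi_v(E_1)^{-1},$$
where $\theta_{11}$ is the matrix of the pullback in crystalline cohomology of a morphism $\tilde\theta_{11}:\tilde E_{1,v}\to\tilde E_{s,v}$. Taking determinants yields $\det F_{11}=\det\Pi_v(E_s)\cdot\det\theta_{11}\cdot\det\Pi_v(E_1)^{-1}$.

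The first key step is to show that this quantity lies in $K(s)$, in fact in $\Q$, independently of any transcendental data.

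\begin{itemize}
\item For the period factors, we use that both Hodge bases and the chosen crystalline bases are symplectic. The comparison isomorphism $\rho_v$ respects the Riemann bilinear form up to a factor, which is $1$ in the crystalline setting, since there is no Tate twist. Hence $\det\Pi_v(E)=1$ for every elliptic curve here, consistent with the CM normalization $\operatorname{diag}(\varpi_v,\varpi_v^{-1})$ in \Cref{lemmaperiods1}.
\item For $\theta_{11}$, the determinant of the action of an isogeny on $H^1_{\crys}$ of elliptic curves equals its degree, an integer, because $\tilde\theta_{11}^\vee\circ\tilde\theta_{11}=[\deg]$ and the pullback preserves the alternating pairing up to $\deg$. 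If $\tilde\theta_{11}=0$, the determinant is simply $0$.
\end{itemize}

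Hence $\det F_{11}=d_v\in\Z$. We then take the same quadratic polynomial $g$ as before, which satisfies $g(Y_G(x))=1$ by \Cref{proptrivialrelations}, and set
$$R_{s,v}:=\det F_{11}(\underline X)-d_v\,g(\underline X),$$
where $F_{11}(\underline X)$ is the upper-left block of $J^{-1}[\psi]_{dR}\underline X[\phi^\vee]_{dR}J$. Properties (1) and (2) are then immediate: the vanishing comes from the identity above evaluated at $Y_G(s)$, and homogeneity of degree $2$ holds because both terms are quadratic.

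For (3), observe that $R_{s,v}$ has exactly the same shape as the archimedean polynomial; only the scalar $d_v$ differs. So the Gr\"obner-basis computation of \Cref{section:apparchimedean} applies verbatim, with $d_v$ treated as a symbolic parameter. The coefficients of $X_{11}X_{23}$, $X_{11}X_{33}$ and $X_{21}X_{33}$ in the remainder are the $2\times2$ minors $d_{11}d_{32}-d_{12}d_{31}$, $d_{11}d_{33}-d_{13}d_{31}$ and $d_{12}d_{33}-d_{13}d_{32}$ of $A_s$. Those coefficients did not involve $d_v$ in the archimedean case, and the same holds here. If they all vanished, the second column of $\adj(A_s)$ would be zero, contradicting $A_s\in\GL_3(\bar\Q)$, which holds because $[\psi]_{dR}$ is block lower triangular with invertible diagonal blocks.

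The main obstacle I expect is the precise justification that $\det\Pi_v(E)$ is the same constant for $E_s$ and $E_1$ at a supersingular place. This requires compatibility of the Berthelot–Ogus isomorphism with the polarization pairing in the chosen symplectic crystalline bases, together with the fact that in the supersingular case no ordinary splitting is available to simplify the matrices. I would argue this via functoriality of cup product and trace under $\rho_v$. In any case only the ratio $\det\Pi_v(E_s)/\det\Pi_v(E_1)$ matters, and both determinants equal the same normalization constant. Even a nonzero constant $c_v\in K(s)_v$ would suffice, provided it is algebraic, which it is because it equals $\det F_{11}/d_v$ and $\det F_{11}\in K(s)$.
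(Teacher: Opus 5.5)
Your argument is correct, but in the main case it follows a different route from the paper's.

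\textbf{How the paper proceeds.} It splits according to whether $\tilde\theta_{11}$ vanishes. When $\tilde\theta_{11}=0$ it does exactly what you do, with $d_v=0$, and reuses the computation from \Cref{proparchimedean}, since $d_v$ does not enter the relevant coefficients. When $\tilde\theta_{11}$ is an isogeny, it never evaluates $\det F_{11}$. Instead it uses
\[
F_{21}F_{11}^{-1}=\Pi_v(E_s)\,\theta_{21}\theta_{11}^{-1}\,\Pi_v(E_s)^{-1}.
\]
It observes that, up to a power of $\deg\tilde\theta_{11}$, $\theta_{21}\theta_{11}^{-1}$ is the matrix of the endomorphism $\tilde\theta_{21}\circ\tilde\theta_{11}^{\vee}$ of $\tilde E_{s,v}$. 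Taking determinants gives $\det F_{21}-d_v\det F_{11}=0$ with $d_v$ algebraic. Non-triviality then needs a separate Gr\"obner computation and a case analysis on the entries of $A_s$ and $C_s$.

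\textbf{What each route buys.} The paper's route is insensitive to normalizations. The periods cancel by conjugation, and the determinant of the matrix of an endomorphism is basis-independent, being its degree. So nothing is needed about $\det\Pi_v(E)$ or about how the crystalline bases are chosen. Your route needs $\det\Pi_v(E_s)=\det\Pi_v(E_1)$ and $\det\theta_{11}=\deg\tilde\theta_{11}$. That is, you need symplectic crystalline bases and the compatibility of the Berthelot--Ogus isomorphism with cup product and trace. This is consistent with the paper's conventions: bases are symplectic on both sides, and the CM normalization in \Cref{lemmaperiods1} has determinant $1$ at finite places. In exchange, you get non-triviality directly from the archimedean computation, with no case analysis.

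\textbf{A correction.} Your closing fallback is circular. $\det F_{11}$ is $\iota_v$ of a quadratic expression in the $v$-adic values of $Y_G$ at $s$. There is no a priori reason for it to lie in $K(s)$; that is exactly what the relation is meant to establish. So you cannot deduce the algebraicity of $\det\Pi_v(E_s)/\det\Pi_v(E_1)$ from ``$\det F_{11}/d_v$''. Your argument genuinely rests on the normalization $\det\Pi_v(E)=1$, or at least on this ratio being a known algebraic constant. Justify that directly, as in your first bullet via trace compatibility, and drop the last sentence.
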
 
\begin{proof}We follow the notation in the proof of \Cref{proparchimedean} to rewrite \eqref{eq:relgenerator} as
\begin{equation}\label{eq:relgenrewritten}
	\left(F_{i j}\right)=\begin{pmatrix}\Pi_{v}\left(E_{s}\right) & 0 & 0 \\ 0 & \Pi_{v}\left(E_{s}\right) & 0 \\ 0 & 0 & \Pi_{v}\left(E_{s}\right)\end{pmatrix}\left(\theta_{i j}\right)\begin{pmatrix}\Pi_{v}\left(E_{1}\right) & 0 & 0 \\ 0 & \Pi_{v}\left(E_{2}\right)& 0 \\ 0 & 0 & \Pi_{v}\left(E_{3}\right)\end{pmatrix}^{-1},
\end{equation}
	where $F_{i j}$ and $\theta_{i j}$ for $1 \leq i, j \leq 3$ denote $2 \times 2$ matrices. From \Cref{matrixtheta} we know that each $\theta_{i j}$ is the matrix associated to the pullback of a morphism $\widetilde{\theta}_{i j}: \widetilde{E}_{j, v} \rightarrow \widetilde{E}_{s,v}$.
	
	We now consider the morphisms $\widetilde{\theta}_{11}: \widetilde{E}_{1, v} \rightarrow \widetilde{E}_{s,v}$ and $\widetilde{\theta}_{21}: \widetilde{E}_{1, v} \rightarrow \widetilde{E}_{s,v}$. If $\widetilde{\theta}_{11}=0$, then arguing as in \Cref{proparchimedean} we get $\det F_{11}=0$. The polynomial $R_{s,v}$ corresponding to $\det F_{11}=0$ will satisfy all the properties we want with the possible exception of its non-triviality, i.e. $R_{s, v} \notin I\left(\SP_6\right)$. This follows by the same computation as in the previous proof. To see this note that the $d_v$ that appears in \eqref{eq:archrelation} does not appear in any of the coefficients of the remainder used in the previous proof.
	
	From now on, we assume $\widetilde{\theta}_{11}$ is an isogeny. We consider the endomorphism $\widetilde{\varphi}:=\widetilde{\theta}_{21} \circ \widetilde{\theta}_{11}^{\vee} \in \End(\widetilde{E}_{s,v})$, where $\widetilde{\theta}_{11}^{\vee}$ is the dual of $\widetilde{\theta}_{11}$.
	
	Arguing much as in the proof of Proposition $4.10$ in \cite{papaspadicpart1}, it is straightforward to see that the matrix corresponding to $\widetilde{\varphi}$ is $[\widetilde{\varphi}]=(\deg \widetilde{\theta}_{11}) \theta_{21} \theta_{11}^{-1}$. Furthermore, we have $e_{v}:=\operatorname{det}[\widetilde{\varphi}] \in \bar{\Q}$.
	
	By \eqref{eq:relgenrewritten} we get
	$$
	F_{21} F_{11}^{-1}=\Pi_{v}\left(E_{s}\right) \theta_{21} \theta_{11}^{-1} \Pi_{v}\left(E_{s}\right)^{-1}.
	$$
	We set $d_{v}:=\frac{e_{v}}{\deg \widetilde{\theta}_{11}} \in \overline{\mathbb{Q}}$ and take determinants in the above to get
	$$
	\det F_{21}-d_{v} \det F_{11}=0.
	$$
	
	This corresponds to a degree $2$ homogeneous polynomial which we denote again by $R_{s,v}$. This will satisfy all the properties we want by construction with the possible exception of the last one.
	
	As in the previous proof we assume $R_{s, v} \in I\left(\SP_6\right)$. We now use the code in \Cref{section:appsupersing} and similar notation as in the previous proof. From $c\left(X_{33} X_{51}\right)=0$, $c\left(X_{23} X_{61}\right)=0$, $c\left(X_{21} X_{43}\right)=0$, $c\left(X_{11} X_{53}\right)=0$, and $c\left(X_{11} X_{63}\right)=0$ we get
\begin{equation}\label{eq:supersingoutputcode}
	\begin{aligned}
		& d_{23} e_{22}=0, \text{ }d_{22} e_{23}=0, \text{ }d_{22} e_{21}=0, \\
		& d_{21} e_{22}=0, \text { and } d_{21} e_{23}=0,
	\end{aligned}\end{equation}
where here $C_{s}=\left(e_{i j}\right)$ and $A_{s}=\left(d_{i j}\right)$ in the notation of the previous proof.

	We work in cases. First assume $d_{21}=0$. We also have $d_{22} e_{23}=0$. If $\quad d_{21}=d_{22}=0$ then $d_{23} \neq 0$, since $A_{s} \in \GL_{3}(\bar{\Q})$. This forces $e_{22}=0$ from the above equations. At this point, looking at $c\left(X_{33} X_{41}\right)$, we also get $d_{23} e_{21}=0$, forcing $e_{21}=0$. Since $C_{s} \in \GL_{3}(\bar{\Q})$ we get $e_{23} \neq 0$. We reach a contradiction by looking at $c\left(X_{33} X_{61}\right)=0$ which gives $e_{23} d_{23}=0$.
	
	So if $d_{21}=0$ we must have $d_{22} \neq 0$ and $e_{21}=e_{23}=0$ from our first equations. Again $\det\left(C_{s}\right) \neq 0$ forces $e_{22} \neq 0$. Again from \eqref{eq:supersingoutputcode} we get $d_{23}=0$. Under these constraints $c\left(X_{23} X_{51}\right)=0$ gives $e_{22} d_{22}=0$, a contradiction.
	
	We must thus have $d_{21} \neq 0$ which forces $e_{22}=e_{23}=0$ in \eqref{eq:supersingoutputcode}. Again $\det\left(C_{s}\right) \neq 0$ implies $e_{21} \neq 0$. Applying this to \eqref{eq:supersingoutputcode} now forces $d_{2 2}=0$. The equation $c\left(X_{31} X_{43}\right)=0$ now gives $d_{23} e_{21}=0$, and thus $d_{23}=0$. Finally, $c\left(X_{33} X_{61}\right)=0$ under these constraints simplifies to $e_{21} d_{21}=0$, a contradiction.\end{proof}

\subsection{Places of ordinary reduction}\label{section:relordinary}
By \Cref{lemmatypesofproximity}, it remains to treat the places of ordinary reduction for the elliptic curves $E_{i}$, $1 \leq i \leq 3$. Here, a crucial observation is that the polynomial $R_{s,\ord}$ we construct does not depend on the place $v$. This is a new feature in the G-functions method that appears also in \cite{papaspadicpart1,papaspadicpart2,papaspadicpart3} for $\mathcal{A}_2$ and \cite{daworrpap} for $Y(1)^n$.
\begin{prop}\label{propordinary}
	 There exists a polynomial $R_{s,\ord} \in K(s)\left[X_{i j}: 1 \leq i, j \leq 6\right]$ such that
	\begin{enumerate}
		\item $\iota_{v}\left(R_{s, \ord}\left(Y_{G}(s)\right)\right)=0$ for all $v \in \sum_{K(s), f}$ that are ordinary for $E_{i}$ for $1 \leq i \leq 3$.
		\item $R_{s,\ord}$ is homogeneous of degree at most $3$, and
		\item $R_{s, \ord} \notin I\left(\SP_6\right)$.
	\end{enumerate}
\end{prop}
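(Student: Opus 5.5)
The plan is to exploit the extra structure that \Cref{matrixtheta}(3) gives at ordinary places. There each $\theta_{ij}$ is diagonal, so the shape of $\Pi_v(E_i)$ and $\Pi_v(E_s)$ in the bases of \Cref{section:derhambasis} should force certain entries of $Z$ to vanish. We then want to turn such a vanishing into a polynomial identity in $Y_G(s)$ whose coefficients involve only the global matrices $[\psi]_{dR}$, $[\phi^\vee]_{dR}$ and $J$. Such a polynomial is automatically independent of $v$.

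\emph{Step 1: shape of the periods.} Write \eqref{eq:relgenerator} as in \eqref{eq:relgenrewritten}, $F_{ij}=\Pi_v(E_s)\,\theta_{ij}\,\Pi_v(E_j)^{-1}$, with $\theta_{ij}=\diag(\xi_{ij},\zeta_{ij})$. For $E_1$, \Cref{lemmaperiods1} gives $\Pi_v(E_1)=\diag(\varpi_v,\varpi_v^{-1})$; for an ordinary elliptic curve the unit-root splitting is adapted to $F^1$.
\begin{itemize}
\item I would first check from \S2.1.1 of \cite{papaspadicpart2} that, for ordinary $E_2,E_3,E_s$, the matrix $\Pi_v$ is triangular of a fixed type. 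I expect lower triangular: $\omega$ maps into one fixed line, namely the one complementary to the unit-root line.
\item Then every $F_{ij}$ is a product of lower triangular matrices and is itself lower triangular. This gives $Z_{2i-1,2j}=0$ for all $1\le i,j\le 3$, at every ordinary $v$.
\end{itemize}

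\emph{Step 2: the relation.} Take $R_{s,\ord}(\underline X):=\bigl(J^{-1}[\psi]_{dR}\,\underline X\,[\phi^\vee]_{dR}J\bigr)_{1,2}$, or any other $(2i-1,2j)$ entry. This is a linear form with coefficients in $K(s)$ and does not depend on $v$, so property (1) holds by Step 1 and property (2) is immediate.

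\emph{Step 3: non-triviality.} The form $R_{s,\ord}$ equals $\operatorname{tr}(M\underline X)$ for $M=[\phi^\vee]_{dR}J\,e_{2,1}\,J^{-1}[\psi]_{dR}$, which is a nonzero rank-one matrix. $\SP_6(\bar{\Q})$ spans $M_6$ linearly, so no nonzero linear form vanishes on it, and hence $R_{s,\ord}\notin I(\SP_6)$. If needed, this can be double-checked by Gr\"obner reduction as in \Cref{section:apparchimedean}.

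\emph{Main obstacle.} The crux is Step 1. If the normalization of \cite{papaspadicpart2} does not make $\Pi_v$ triangular of the same type for all three ordinary curves, no linear relation may be available.
\begin{itemize}
\item The fallback is to mimic \Cref{propsupersing}. Diagonality of $\theta_{ij}$ makes the matrices $\theta_{ij}\theta_{11}^{-1}$ diagonal and mutually commuting. Moreover $\theta_{21}\theta_{11}^{-1}$ has eigenvalues coming from $\End(\tilde E_{s,v})$, which for ordinary $\tilde E_{s,v}$ is an order in an imaginary quadratic field. Its $p$-adic embedding depends on $v$ only through a choice of conjugate.
\item From this I would eliminate the $v$-dependent scalars by forming commutator- or minor-type expressions, for instance the vanishing of a suitable $3\times 3$ minor built from the blocks $F_{i1}$. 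This would yield a cubic $R_{s,\ord}$, consistent with ``degree at most $3$''.
\item Its non-triviality would then be checked by a Gr\"obner basis computation, with a case analysis on the entries of $A_s$ and $C_s$ as in \Cref{propsupersing}.
\end{itemize}
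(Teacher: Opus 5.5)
\textbf{Step 1 is false, so the linear relation is not available.} Because the $\theta_{ij}$ are diagonal (\Cref{matrixtheta}(3)), the crystalline bases are adapted to the splitting into the unit-root line and the slope-one line. The Hodge line $F^1$ is always transverse to the unit-root line. However, it equals the slope-one line only when the Serre--Tate parameter satisfies $\log q=0$, i.e.\ for lifts isogenous to the canonical lift, and these have CM. So the $(1,2)$ entries of $\Pi_v(E_2)$ and $\Pi_v(E_3)$ never vanish, since these curves are non-CM. Neither does the $(1,2)$ entry of $\Pi_v(E_s)$ in the typical case where $E_s$ is non-CM. Concretely, $Z_{12}=(F_{11})_{12}=(\Pi_v(E_s))_{12}\,\zeta_{11}\varpi_v$, which is nonzero in general. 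The only period matrix that is diagonal at ordinary $v$ is $\Pi_v(E_1)=\diag(\varpi_v,\varpi_v^{-1})$, the one for the CM curve: Deuring gives $v\nmid\disc F_0$, so \Cref{lemmaperiods1} applies. This is the one structural fact the argument can exploit.

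\textbf{Your fallback does not supply the missing mechanism.} The matrix $\theta_{21}\theta_{11}^{-1}$ enters through
$F_{21}F_{11}^{-1}=\Pi_v(E_s)\theta_{21}\theta_{11}^{-1}\Pi_v(E_s)^{-1}$, conjugated by the non-diagonal $\Pi_v(E_s)$. Only conjugation invariants survive, and they carry the scalars $\xi_{21}/\xi_{11}$ and $\zeta_{21}/\zeta_{11}$. These come from $\tilde\theta_{21}\circ\tilde\theta_{11}^{\vee}$, which is built from the $v$-dependent identification $\tilde A_{s,v}=\tilde A_{0,v}$ and itself varies with $v$. Choosing a conjugate does not confine it to a finite set; this is exactly why the relation in \Cref{propsupersing} is $v$-dependent and needs Lang--Trotter.

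The missing idea is to divide on the other side. Since $F_{i1}=\Pi_v(E_s)\diag(\mu_{i1},\lambda_{i1})$, the product $F_{11}^{-1}F_{21}=\Pi_v(E_1)\theta_{11}^{-1}\theta_{21}\Pi_v(E_1)^{-1}$ is diagonal, and the unknown $\Pi_v(E_s)$ cancels. The off-diagonal entry of $F_{11}^{\adj}F_{21}$ is then a quadratic form in $Y_G(s)$ whose coefficients come only from $[\psi]_{dR}$, $[\phi^\vee]_{dR}$ and $J$, hence independent of $v$. The paper multiplies this by the linear form $(F_{11})_{11}$ to cover places with $\tilde\theta_{11}=0$, giving degree $3$. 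Non-triviality follows from primality of $I(\SP_6)$ together with a Gr\"obner reduction showing the quadratic factor lies in $I(\SP_6)$ only if $\det A_s=0$.

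This quadratic is in fact a $2\times 2$ minor: the second columns of $F_{11}$ and $F_{21}$ are both multiples of the second column of $\Pi_v(E_s)$. So your instinct to use minors of the first block column is sound. But they vanish because $\Pi_v(E_1)$ is diagonal, not because of arithmetic in $\End(\tilde E_{s,v})$, and the non-triviality computation still has to be carried out.
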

\begin{proof}
	We follow the notation of the last two proofs. From \eqref{eq:relgenrewritten} we get
	$$
	\begin{aligned}
		& F_{11}=\Pi_{v}\left(E_{s}\right) \cdot \theta_{11} \cdot \Pi_{v}\left(E_{1}\right)^{-1} \text { and } \\
		& F_{21}=\Pi_{v}\left(E_{s}\right) \cdot \theta_{21} \cdot \Pi_{v}\left(E_{1}\right)^{-1} .
	\end{aligned}
	$$
	
	Since $v$ is a place of ordinary reduction of the CM elliptic curve $E_{1}$ it follows, by Deuring's reduction theorem, that $v \not| \disc\left(\End_{\bar{\Q}}^{0}\left(E_{1}\right)\right)$. We may thus use \Cref{lemmaperiods1} to write $\Pi_{v}\left(E_{1}\right)=\begin{pmatrix}\varpi & 0 \\ 0 & \varpi^{-1}\end{pmatrix}$ for some $\varpi \in \mathbb{C}_{v}$. 
	
	Also from \Cref{matrixtheta} we may find $\xi_{i j}, \zeta_{i j} \in \mathbb{C}_{v}$ for which $\theta_{i j}=\begin{pmatrix}\xi_{i j} & 0 \\ 0 & \zeta_{i j}\end{pmatrix}$. Setting $\mu_{i 1}:=\xi_{i 1}\varpi^{-1}$ and $\lambda_{i 1}:=\zeta_{i 1} \varpi$, we may rewrite the above as
	\begin{equation}\label{eq:ordinaryproof1}
	\begin{aligned}
	& F_{11}=\Pi_{v}\left(E_{s}\right) \cdot\begin{pmatrix}	\mu_{11} & 0 \\0 & \lambda_{11}	\end{pmatrix} \text { and } \\
	& F_{21}=\Pi_{v}\left(E_{s}\right) \cdot\begin{pmatrix}\mu_{21} & 0 \\0 & \lambda_{21}	\end{pmatrix} .
\end{aligned}
	\end{equation}

	If $\tilde{\theta}_{11}$ is the zero morphism then $\mu_{11}=\lambda_{11}=0$ which forces $F_{11}=0$. Here we let $R_{s,1}$ be the polynomial in $K(s)\left[X_{i, j}\right]$ corresponding to the top left entry of $F_{1,1}$. By construction, this will be a degree 1 homogeneous polynomial that will satisfy the first property for all $v$ ordinary for which $\tilde{\theta}_{1,1}=0$.
	
	We also note here that $R_{s,1} \notin I\left(\SP_6\right)$. This may be seen for example by the fact that $R_{s,1}$ is linear and $I\left(\SP_6\right)$ has a minimal basis that comprises of polynomials of degree 2 .
	
	Now assume $\widetilde{\theta}_{11} \neq 0$ and is hence an isogeny of elliptic curves. From this we get that $\theta_{11}$, and thus also $F_{11}$, is invertible. From \eqref{eq:ordinaryproof1} we then get
	$$
	F_{11}^{-1} F_{21}=\begin{pmatrix}\mu_{21} \mu_{11}^{-1} & 0\\0 & \lambda_{21} \lambda_{11}^{-1}\end{pmatrix}.
	$$
Multiplying both sides of the above by $\det F_{11} \neq 0$ we get
\begin{equation}\label{eq:relationsordinaryhard}
	F_{11}^{\adj} \cdot F_{21}=\begin{pmatrix}\mu & 0 \\0 & \lambda	\end{pmatrix}\end{equation}
	for some $\mu, \lambda \in \mathbb{C}_{v}$.
	
	We set $\left(A_{i j}\right):=F_{11}^{\adj} \cdot F_{21}$. We then have the relations $A_{12}=A_{21}=0$. We set $R_{s, 2}$ to be the polynomial corresponding to $A_{12}$.
	
	At this point we let $R_{s,\ord}:=R_{s, 1} \cdot R_{s, 2}$. By our exposition it follows that $R_{s,\ord}$ satisfies all the properties we want with the only possible exception of $R_{s,\ord} \notin I\left(\SP_6\right)$. We assume this fails. Since trivially $R_{s, 1} \notin I\left(\SP_6\right)$ and this ideal is prime we must have $R_{s, 2} \in I\left(\SP_6\right)$. This leads to a contradiction working much as in the previous proofs.
	
	In more detail, we write $A_{s}=\left(d_{i j}\right)$ and $C_{s}=\left(e_{i j}\right)$ to be the elements of $\GL_{3}(\bar{\mathbb{Q}})$ that appear in the proof of \Cref{proparchimedean}. The code in \Cref{section:appordinary} outputs a list of monomials and coefficients for the remainder of $R_{s, 2}$ divided by a Gr\"obner basis of $I\left(\SP_6\right)$.
	
	The equations $c\left(X_{13} X_{63}\right)=c\left(X_{13} X_{43}\right)=c\left(X_{13} X_{53}\right)=0$ give $d_{31} e_{21}=d_{31} e_{22}=d_{31} e_{33}=0$. Since $\det\left(e_{i j}\right) \neq 0$ we get $d_{31}=0$. Similarly, the equations $c\left(X_{23} X_{63}\right)=c\left(X_{23} X_{43}\right)=c\left(X_{23} X_{53}\right)=0$ give $d_{32} e_{21}=d_{32} e_{22}=d_{32} e_{33}=0$. These give $d_{32}=0$. Finally, looking at the equations $c\left(X_{33} X_{63}\right)=c\left(X_{33} X_{43}\right)=c\left(X_{33} X_{53}\right)=0$ gives $d_{33} e_{21}=d_{33} e_{22}=d_{33} e_{33}=0$. This now forces $d_{33}=0$, and thus $\det\left(A_{s}\right)=0$ contradicting $A_{s} \in \GL_{3}(\bar{\Q})$.
\end{proof}
\section{Height bounds and applications}\label{section:heightbounds}

In this final section, we establish the height bounds alluded to in the introduction. A key additional input is a version of the Lang–Trotter conjecture for pairs of elliptic curves, which we recall before proceeding to the proof.
\subsection{Lang-Trotter for pairs}\label{section:langtrotter}
In \cite{langtrotter} S. Lang and H. Trotter proposed a probabilistic model for the distribution of the Frobenius traces $a_p(E)$ of an elliptic curve $E$ defined over $\Q$. Their model predicts that, for a fixed integer $t$, the quantity $$\pi_{E,t}(x):=\#\{p\leq x :a_p(E)=t \}$$will have asymptotic behavior of the form \begin{equation}\label{eq:onecurvelangtrott}
	\pi_{E,t}(x)\sim C_{E,t} \frac{\sqrt{x}}{\log x}.
\end{equation} Moreover the construction of their model is such that it gives an explicit description for the constant $C_{E,t}$.

In Remark $2$, page $37$ of \cite{langtrotter} Lang and Trotter, based on natural asymptotics related to the Sato-Tate conjecture, also make the following:
\begin{conj}[Simultaneous supersingular reduction for pairs \cite{langtrotter}]\label{langtrottersupersingularpairs} Let $\left(E_{1}, E_{2}\right)$ be a pair of elliptic curves defined over $\Q$. Assume that neither of these is CM and that they are not isogenous over $\bar{\Q}$.
	
There exists a non-negative constant $c_{E_1,E_2}$ such that
	$$
\pi_{E_1,E_2}(x):=\#\left\{p \leqslant x: E_1\text{ and }E_2\text{ have supersingular reduction modulo }p\right\} \sim c_{E_1,E_2} \log \log x,
	$$as $x\rightarrow \infty$.
\end{conj} 

\begin{remarks}$1$. \Cref{langtrottersupersingularpairs} is a special case of a ``Lang-Trotter conjecture for pairs of elliptic curves'' that recently appeared in \cite{akbaryparks}. The authors describe the natural analogue of the probabilistic model described by Lang-Trotter in \cite{langtrotter}, this time for pairs $(E_1,E_2)$ of elliptic curves that are as in \Cref{langtrottersupersingularpairs}. 
	
	Their model leads to a description of the asymptotic behavior of the quantity
	$$\pi_{E_1,E_2,t_1,t_2}(x):=\#\left\{p \leqslant x:p\nmid N_1N_2,\text{ } a_{p}\left(E_{1}\right)=t_1,\text{ }a_{p}\left(E_{2}\right)=t_2\right\}, $$
	where $t_1$ and $t_2$ are integers and $N_i$ stands for the conductor of the elliptic curve $E_i$. See Conjecture $1.2$ in \cite{akbaryparks}. In particular, they give conjectural descriptions for the constant $c_{E_1,E_2}$ of \Cref{langtrottersupersingularpairs}, i.e. the case $t_1=t_2=0$ in their notation.\\
	
	$2$. The original conjecture of Lang-Trotter remains widely open. The best known results in the case $t=0$ are due to N. Elkies, see \cite{elkiesLT}, who shows that $\pi_{E,0}(x)\ll x^{3/4}$.\\
	
	$3$. Let $E_{\alpha,\beta}$ be the elliptic curve defined by the equation $y^2=x^3+\alpha x+\beta$ with $\alpha$, $\beta\in \Z$. \Cref{langtrottersupersingularpairs} is known to hold on average for pairs of elliptic curves of the form $(E_{\alpha,\beta}, E_{\alpha',\beta'})$ due to work of Fouvry-Murty, see \cite{fouvrymurty}.\end{remarks}

\subsection{The height bound}\label{section:proofofhtbound}
In this subsection, we establish the height bounds announced in \Cref{heightboundintro} in the following, equivalent, form:
\begin{theorem}\label{mainhtbound}
	Let $f: X \rightarrow S$ be a $1$-parameter family of principally polarized abelian threefolds defined over a number field $K$ such that the image of the induced morphism $S \rightarrow A_{3}$ is a Hodge generic curve.

	Assume there exists $s_{0} \in S(K)$ for which $X_{s_{0}}$ is isogenous to $ E_{1} \times_K E_{2} \times_K E_{3}$ where $E_{1}$ is a CM elliptic curve and $E_{2}$, $E_{3}$ are non-CM, non-isogenous elliptic curves defined over $\mathbb{Q}$.
	
	If the Lang-Trotter conjecture holds for the pair $(E_{2},E_{3})$, then there exist effectively computable constants $c_{1}, c_{2}>0$ such that $h(s) \leq c_{1} \cdot[K(s): \mathbb{Q}]^{c_{2}}$ for all $s$ in the set
	$$
	\left\{s \in S(\bar{\Q}): X_{s} \text{ is isogenous to } E^{3}\text{ for some elliptic curve }E\right\} .
	$$
\end{theorem}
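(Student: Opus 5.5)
We follow André's G-functions method in the form used in \cite{papaspadicpart1,daworrpap}, in which the height bound is derived from André's Hasse principle, the ``Main Lemma'' of \cite[VII.5]{andre1989g}. Fix $s$ in the set of \Cref{mainhtbound} and assume, as in \Cref{section:relationssetting}, that $L_s=K(s)$ (this changes $[K(s):\Q]$ only by an absolute constant). We will construct a single polynomial $R_s\in K(s)[X_{ij}]$ with three properties. First, $\iota_v(R_s(Y_G(s)))=0$ for every $v\in\Sigma_{K(s),G}$ at which $s$ is $v$-adically close to $s_0$. Second, $R_s\notin I(\SP_6)$, so that by \Cref{proptrivialrelations} the relation is non-trivial. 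Third, $\deg R_s\le c\,[K(s):\Q]^{c'}$. Granting such an $R_s$, André's Hasse principle gives $h(x(s))\le c_1'\deg(R_s)^{c_2'}$. Since $x$ is a good local parameter, $h(s)$ is comparable to $h(x(s))$, and the bound $h(s)\le c_1[K(s):\Q]^{c_2}$ follows.

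We build $R_s$ as a product. By \Cref{lemmatypesofproximity}, every place of proximity is archimedean, ordinary for all $E_i$, or supersingular for all $E_i$; in particular $E_1$ has no place of bad reduction, which is why these are the only cases. For the archimedean places we take $\prod_{v\in\Sigma_{K(s),\infty}}R_{s,v}$ using \Cref{proparchimedean}. This has degree $2[K(s):\Q]$. For all ordinary places we use one factor $R_{s,\ord}$ from \Cref{propordinary}, of degree at most $3$ and independent of $v$. For the supersingular places of proximity we take the product of the $R_{s,v}$ of \Cref{propsupersing} over all such $v$. Each factor lies outside $I(\SP_6)$, and $I(\SP_6)$ is prime, so the product $R_s$ also lies outside $I(\SP_6)$. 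Every factor vanishes at the place to which it is attached, so $R_s$ vanishes at every place of proximity. Hence
$$\deg R_s\le 2[K(s):\Q]+3+2\,N_{\ssing}(s),$$
where $N_{\ssing}(s)$ is the number of places $v$ of $K(s)$ at which $s$ is close to $s_0$ and $v$ is supersingular for $E_2$ and $E_3$.

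The main obstacle is bounding $N_{\ssing}(s)$ polynomially in $[K(s):\Q]$. Each such $v$ lies above a rational prime $p$ at which both $E_2$ and $E_3$ have supersingular reduction, and at most $[K(s):\Q]$ places lie over each $p$. Thus it suffices to bound the number of such primes $p$ that occur as residue characteristics of places of proximity. Proximity means $|x(s)|_v\le r_v(Y_G)$, and by \Cref{associatedGfunslemma} this is the same as $X_s\equiv X_0 \bmod v$. For the archimedean contributions, our plan is to use the Weil-height / product-formula estimate: if $s$ is close to $s_0$ at places above $p$, then $x(s)$ is $p$-adically small there, so these places contribute at least about $\log p$ to $h(1/x(s))$, with the contribution weighted by local degrees. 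Equivalently, the ``proximity divisor'' has norm bounded by $\exp(c[K(s):\Q]h(s))$.

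This step gives the following. Let $p_1<p_2<\dots<p_m$ be the distinct primes at which $E_2$ and $E_3$ are both supersingular and which lie under places of proximity. Then $\sum\log p_i\le c[K(s):\Q](h(s)+1)$. Now \Cref{langtrottersupersingularpairs} says that the $n$-th simultaneously supersingular prime is at least $\exp\exp(c n)$, so $\sum_{i\le m}\log p_i\ge e^{cm}$. Therefore
$$m\ll\log\bigl([K(s):\Q](h(s)+1)\bigr).$$
Substituting into the Hasse principle bound gives
$$h(s)\le c_1'\bigl([K(s):\Q]\log\bigl([K(s):\Q](h(s)+1)\bigr)\bigr)^{c_2'},$$
and since the right-hand side grows only logarithmically in $h(s)$, this inequality bootstraps to $h(s)\le c_1[K(s):\Q]^{c_2}$. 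Only an upper bound $\pi_{E_2,E_3}(x)\ll\log\log x$ is needed, which is the weaker form of the conjecture referred to in \Cref{remarkafterproof}.

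I expect the delicate points to be the following. First, the step converting ``$s$ close to $s_0$ at $v$'' into a lower bound for the $v$-contribution to $h(x(s))$ must be made precise, including for ramified $v$. Second, the dependence of the Main Lemma constants on $\deg R_s$ must be tracked honestly. Third, the Lang–Trotter input must be applied to rational primes rather than to places of $K(s)$.
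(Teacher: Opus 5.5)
Your proposal is correct and follows essentially the paper's argument. It uses the same product $R_s$ of archimedean factors, supersingular factors and a single $v$-independent ordinary factor, André's Hasse principle, and a Weil-height bound on the residue characteristics of places of proximity fed into the Lang–Trotter upper bound. Your ``$n$-th simultaneously supersingular prime is at least $\exp\exp(cn)$'' is just the inverse form of the paper's $m\le\pi_{\mathcal{E}_2,\mathcal{E}_3}(e^{\alpha(s)})\ll\log\alpha(s)$.

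The one bookkeeping point you gloss over is that $E_2,E_3$ are base changes of curves $\mathcal{E}_2,\mathcal{E}_3$ over $\Q$. These may have bad reduction at primes dividing their conductors $N_2N_3$, where the Lang–Trotter count does not apply. The paper separates these primes off and bounds the corresponding places trivially by $[K(s):\Q]N_2N_3$.
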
 
\begin{proof}
	We have chosen to work in the simplified setting discussed in \Cref{section:backgroundgfuns}. This allows us to skip several technical difficulties and isolate the novel part of our argument. For more details we point the interested reader to the proof of Proposition $5.1$ in \cite{papaspadicpart1}.
	
	We thus consider a matrix of $G$-functions associated to the pair $\left(X \rightarrow S, s_{0}\right)$. We fix $s \in S(\bar{\Q})$ a point of the set in question and consider the set
	$$
	\Sigma(s):=\left\{v \in \Sigma_{K(s)}: s \text { is } v \text {-adically close to } s_{0}\right\} .
	$$
	
	By \Cref{lemmatypesofproximity}, $\Sigma(s)=\Sigma(s)_{\infty} \cup \Sigma(s)_{\ord} \cup \Sigma(s)_{\ssing}$, where $\Sigma(s)_{*}$ denotes the set of archimedean, respectively ordinary for $X_{0}$, respectively supersingular for $X_{0}$, places in $\Sigma(s)$.
	
	We define the polynomial
	$$
	R_{s}:=\left(\prod_{v \in \Sigma(s)_{\infty}} R_{s, v}\right) \cdot\left(\prod_{v \in \Sigma(s)_{\ssing}} R_{s, v}\right) \cdot R_{s, \ord},
	$$
	where $R_{s,\ord}$ is as in \Cref{propordinary} and $R_{s,v}$ is as in \Cref{proparchimedean} for $v \in \Sigma(s)_{\infty}$ and as in \Cref{propsupersing} for $v \in \Sigma(s)_{\ssing}$.
	
	By construction $R_s$ defines a ``global and non-trivial relation'' among the values of the family of G-functions $Y_{G}$ at $s$. By the André - Bombieri ``Hasse principle for values of $G$-functions'', see \cite[Ch. VII, \S5]{andre1989g}, there exist constants $c_{1}$, $c_{2}>0$ such that
	\begin{equation}\label{eq:htbound1}
			h(x(s)) \leqslant c_{1} \deg R_{s}^{c_{2}},
	\end{equation}
	for all such points.

	By construction, we have the trivial bound
	\begin{equation}\label{eq:degreehtboundproof1}
		\deg R_{s} \leq 2 \cdot[K(s): \mathbb{Q}]+2 \cdot\left|\Sigma(s)_{\ssing}\right|+3 .
	\end{equation}
	
	Recall that, in order to apply the results of Section \ref{section:relations}, we may have had to replace $K$ by a finite extension. In other words, our $E_2$ and $E_3$ above will be of the form $E_j=\CE_j\times_{\Q} K$, for elliptic curves $\CE_j$ defined over $\Q$. 
	
	While the $E_j$ will, by assumption, have everywhere semi-stable reduction, this is not necessarily true for the $\CE_j$. Therefore, let us set $N_j$ to be the conductor of $\CE_j$. Also, given $v\in\Sigma(s)_{\ssing}$, let us write $p(v):=\chara k(v)$, the characteristic of the residue field of $\CO_K$ at $v$. Then we can write $\Sigma(s)_{\ssing}$ as the disjoint union of the sets $\Sigma(s)_{\ssing,1}:=\{v\in\Sigma(s)_{\ssing}:p(v)|N_2 N_3\}$ and $\Sigma(s)_{\ssing,2}:=\{v\in\Sigma(s)_{\ssing}:p(v)\nmid N_2 N_3\}$. Using the trivial bound $|\Sigma(s)_{\ssing,1}|\leq [K(s):\Q](N_2N_3)$, we may rewrite \eqref{eq:degreehtboundproof1} as 
	\begin{equation}\label{eq:degreehtboundproof2}
		\deg R_{s} \leq (2+2N_2N_3) \cdot[K(s): \mathbb{Q}]+2 \cdot\left|\Sigma(s)_{\ssing,2}\right|+3.
	\end{equation}
	
	\begin{claim}There exist positive constants $\alpha_{0}$ and $c_3$, depending on $E_2$ and $E_3$, such that either 
	$\left|\Sigma(s)_{\ssing,2}\right| \leqslant c_{3}[K(s): \mathbb{Q}](\log [K(s): \mathbb{Q}]+\log h(x(s)))$
	or $h(x(s)) \leqslant \alpha_{0}$.
	\end{claim}
	
\begin{proof}[Proof of the claim]
		By the definition of ``v-adic proximity'' we have, for the subset $\Sigma(s)_f$ of $\Sigma(s)$ that consists of finite places,
		$$
		\Sigma(s)_f \subset \left\{v \in \Sigma_{K(s), f}:|x(s)|_{v}<1\right\}.
		$$
	
		Setting
			$$\Sigma(s)_{\mathbb{Q}}:=\left\{p \in \mathbb{Q}: p=p(v) \text{ for some }v \in \Sigma(s)_{f}\right\},$$
        by the definition of the Weil height, we have
		$$
	\sum_{p\in \Sigma(s)_{\Q}}^{}\log p\leq 	\sum_{p(v), |x(s)|_v<1 }^{}\log p(v) \leq [K(s):\Q] \cdot h(x(s)).
		$$
Thus, $p \leqslant e^{[K(s):\Q]\cdot h(x(s))}$ for all $p\in\Sigma(s)_{\mathbb{Q}}$. 
		
		Let $\alpha(s):= [K(s):\Q] \cdot h(x(s))$ and set 
		$$
		\Sigma(s)_{\ssing, \mathbb{Q}}:=\{p\in \Q: p=p(v) \text{ for some } v\in \Sigma(s)_{\ssing,2}\}.
		$$
		From the above we get 
		$$
		|\Sigma(s)_{\ssing,\Q}| \leq \pi_{\CE_2,\CE_3}(e^{\alpha(s)}),
		$$
		in the notation of \Cref{langtrottersupersingularpairs}.
		
		Since $\CE_{2}$ and $\CE_{3}$ are defined over $\Q$, \Cref{langtrottersupersingularpairs} implies that 
		$$
	\pi_{\CE_{2}, \CE_{3}}(x) \sim C_{\CE_{2}, \CE_{3}} \log \log x.
		$$
		In particular, for $x\geq x_0$, for some constant $x_0:=x_0(\CE_2,\CE_3)$, we have 
		$$
		\pi_{\CE_{2}, \CE_{3}}(x) \leqslant\left(C_{\CE_{2}, \CE_{3}}+1\right) \log \log x.
		$$
		If $\alpha(s)\leq x_0$, we deduce that $h(x(s))\leq\alpha_0$ for some $\alpha_0:=\alpha_0(\CE_2,\CE_3)$. Otherwise, 
		$$
		\left|\Sigma(s)_{\ssing,2}\right| \leq[K(s): \mathbb{Q}] \cdot\left|\Sigma(s)_{\ssing, \mathbb{Q}}\right| \leq\left(C_{\CE_{2}, \CE_{3}}+1\right)[K(s): \mathbb{Q}] \log \log e^{\alpha(s)}.
		$$
		The claim follows.
	\end{proof}

	If $h(x(s)) \leqslant \alpha_{0}$ then we are done. If not, we get from the claim a bound of the form
	$$
	\operatorname{deg} R_{s} \leqslant c_{4} \cdot[K(s): \mathbb{Q}]^{1+1 / 2} \cdot \log h(x(s)) .
	$$
	Combining this with \eqref{eq:htbound1} our result follows.\end{proof}
	
	\begin{remark}\label{remarkafterproof}
		We note here that the full strength of \Cref{langtrottersupersingularpairs} is slightly redundant in our case. Indeed, by the proof above a bound of the form $$\pi_{E_1,E_2}(x)\leq c^{\prime}_{E_1,E_2}( \log \log x)^{D_{E_1,E_2}}$$
		would be enough for our application. 
	\end{remark}
\subsection{Lang-Trotter for pairs over number fields}\label{section:langtrotternumberfield}
The condition that $E_2$ and $E_3$ are defined over $\Q$ in \Cref{mainthmzp} is a limitation that stems from our inability to find a reference for a version of \Cref{langtrottersupersingularpairs} for curves defined over arbitrary number fields in the literature. Such a conjecture over number fields seems natural to state based on heuristics coming from the Sato-Tate conjecture.

Let us fix a pair of non-CM, non-isogenous over $\bar{\Q}$ elliptic curves $E$ and $E'$ defined over a fixed number field $K$ with $d:=[K:\Q]$. Given a finite place $v$ of $K$, we write $\mathfrak{p}_v$ for the corresponding prime ideal of $\CO_K$, $p_v$ for the characteristic of the residue field $k_v:=\CO_K/\mathfrak{p}_v$, and $f_v$ for the degree of $\mathfrak{p}_v$, i.e. $p_v^{f_v}=|k_v|$.

Given a place $v$ as above over which both $E$ and $E'$ have good reduction it is classical, see for example \cite{silvermanell}, that for $X\in \{E,E'\}$ we have  \begin{equation}|X(k_v)|=p_v^{f_v}+1-a_X(v),\end{equation}where $a_X(v)$ is an integer satisfying Hasse's inequality $|a_X(v)|\leq 2 \sqrt{p_v^{f_v}}$. The Sato-Tate conjecture, for a single elliptic curve, predicts that the quantity $\frac{a_X(v)}{2 \sqrt{p_v^{f_v}}}$ equidistributes in the interval $[-1,1]$, see for example \cite{murtymurtysatotate}.

On the other hand, see for example the proof on page $150$ of \cite{silvermanell}, we know that $X\in \{E,E'\}$ will have supersingular reduction modulo $v$ if and only if $a_X(v)$ is divisible by the prime $p_v$. We are thus led to consider the condition that $\frac{a_X(v)}{2 \sqrt{p_v^{f_v}}}$ be in the set $\{0,n p_v  :1\leq |n| \leq 2 p_v^{\frac{f_v}{2}-1}\}$. Naively treating the Sato-Tate conjecture as a local density statement, we may think of the probability that $X\in \{E,E'\}$ is supersingular modulo $v$ as 
\begin{equation}\label{eq:satotatenaive1}
	\Prob(X \text{ is supersingular mod }v)\sim C(X)\frac{1}{p_v^{1/2}},
\end{equation}when $f_v=1$ and
\begin{equation}\label{eq:satotatenaive2}
	\Prob(X \text{ is supersingular mod }v)\sim C(X)\frac{p_v^{\frac{f_v}{2}-1}}{p_v^{f_v/2}}=C(X)\frac{1}{p_v},
\end{equation}when $f_v\geq 2$.

In the proof of \Cref{mainhtbound} the problematic set whose cardinality we want an upper bound for, is the set 
$$\Sigma_{\ssing}(E,E'):=\{ v\in\Sigma_{K,f}: \tilde{E}_v\text{, }\tilde{E}_v'\text{ are both supersingular}\}.$$
Following the proof of \Cref{mainhtbound} in the previous subsection, it is easy to see that it would suffice for our purposes to have a bound for the quantity
\begin{equation}\label{eq:langtrottergeneralizationnaive}
	\pi_{E,E',K}(x):=|\{p\leq x: \exists v\in \Sigma_{\ssing}(E,E')\text{ with }v|p \}|.
\end{equation}

The natural analogue of the Sato-Tate conjecture for the pair $(E,E')$ would then allow us to treat the probabilities that appear in \eqref{eq:satotatenaive1} and \eqref{eq:satotatenaive2} as independent. We note here that for $K=\Q$ this version of the Sato-Tate conjecture is a known theorem by work of M. Harris, see \cite{harrissatotatepairs}. In that regard, in the above naive sense, we may bound the probability that $p\leq x$ is in the set that appears in \eqref{eq:langtrottergeneralizationnaive} by \begin{equation}\label{eq:langtrotgenpair1}
	\Sum{v|p, f_v=1}{}C(E,E')\frac{1}{p}+\Sum{v|p, f_v\geq 2}{}C(E,E')\frac{1}{p^2}\leq d C(E,E')\frac{1}{p}.
	\end{equation}
Summing this over all $p\leq x$ would thus lead us to 
\begin{equation}
		\pi_{E,E',K}(x)\leq d C(E,E')\Sum{p\leq x}{}\frac{1}{p}.
\end{equation}The latter sum is known to be of the order $\log\log(x)$ thanks to Merten's second theorem.

All in all, it seems natural based on the above heuristic to state the following:
\begin{conj}\label{conjonsupersingred}[Conjecture on simultaneous supersingular reduction]
	Let $E$ and $E'$ be two elliptic curves defined over a number field $K$. Assume that $E$ and $E'$ are non-CM and non-isogenous over $\bar{\Q}$. Then there exists a constant $C(E,E',K)\geq 0$ for which
	\begin{equation}\label{eq:langtrotconjgenforpairs}
		\pi_{E,E',K}(x)\leq C(E,E',K) \log\log(x),
	\end{equation}as $x\rightarrow\infty$.
\end{conj}
\begin{remarks}$1$. While our treatment of Sato-Tate is heuristic, \Cref{conjonsupersingred} appears to be consistent with Lang-Trotter statements over number fields that appear in the literature for a single elliptic curve. See for example the survey \cite{jamessurvey}.\\
	
	$2$. Replacing the use of \Cref{langtrottersupersingularpairs} in the proof of \Cref{mainhtbound} by \Cref{conjonsupersingred} would allow us to remove the condition in \Cref{mainhtbound}, and hence in \Cref{mainthmzp}, that $E_2$ and $E_3$ be defined over $\Q$.\\
	
	$3$. As noted in \Cref{remarkafterproof}, the bound in \eqref{eq:langtrotconjgenforpairs} is slightly stronger than is required for our purposes. In fact, an upper bound in \eqref{eq:langtrotconjgenforpairs} of the form $ \poly_{E,E',K}(\log \log(x))$ would suffice, where by $\poly_{E,E',K}$ we mean a polynomial whose coefficients and degree depend on $E$, $E'$, and $K$. This version of the argument is carried out explicitly in the proof of the height bounds in \cite{papaszpy1note}.
	\end{remarks}

\subsection{Beyond \Cref{mainthmzp}}\label{secion:speculation}

\Cref{mainhtbound}, and by extension \Cref{mainthmzp}, should be viewed as a special case of a more general phenomenon.

Consider a smooth Hodge generic and irreducible curve $S\subset \mathcal{A}_g$ which is defined over $\bar{\Q}$, $s_0\in S(\bar{\Q})$, and write $A_0$ for the $g$-dimensional abelian variety corresponding to $s_0$. We assume that $s_0$ lies on a special subvariety $Z_0$ of sufficiently large codimension $r_0$, which we assume to be minimal for $A_0$. Let us also fix some subset $\Sha(S)$ of $S(\bar{\Q})$ that is contained in the locus of intersections of $S$ with special subvarieties of codimension at least $2$, i.e. some subset of the points pertinent to the Zilber-Pink conjecture.

As in \Cref{section:backgroundgfuns} we associate to $(S,s_0)$ a family of G-functions. Fix from now on a point $s\in \Sha(S)$. As in the proof of \Cref{mainhtbound} our goal now is to construct a global and non-trivial relation $R_s$ among the values $Y_G(s)$ and establish bounds of the form $\deg(R_s)\ll [\Q(s):\Q]^c$ for some constant $c$ that does not depend on $s$.

In short, there are two crucial phenomena that we expect to hold. The first is that upon assuming that $r_0$ is large enough, the number of possible ``local factors'' $R_{s,v}$ that make up the polynomial $R_s$ is bounded by a constant independent of $s$ as long as $v$ is not a place of supersingular reduction of the ``central fiber'' $A_0$. In our particular case, this is largely the reason for our choice of $s_0$, reflected in \Cref{propordinary}.

The second phenomenon that we expect here is that, upon assuming that $r_0$ is not ``too large'', the number of places of supersingular reduction of $A_0$ should be rather sparse. In our case, this sparsity is controlled by \Cref{langtrottersupersingularpairs}.

\appendix	
\section{Appendix}\label{section:appendixmathematica}
In \Cref{proparchimedean}, \Cref{propsupersing}, and \Cref{propordinary} we had to use a membership test for the polynomials we produced in the ideal $I(\SP_6)$. To do this we used the code in Wolfram Mathematica that we have included here.

\subsection{The base code}\label{section:basecode}
This is the main computational part of the code. The first code below computes the polynomials in question and saves each to a separate file to be used in the following steps.

{\small\begin{doublespace}
	\noindent\({\text{ClearAll}[\text{{``}Global$\grave{ }$*{''}}]}\\
	{\text{(*}\text{Define} \text{ submatrices} \text{ for } \text{{``}de Rham isogenies{''}}.\text{*)}}\\
	{\text{As}=\{\{\text{d11},\text{d12},\text{d13}\},\{\text{d21},\text{d22},\text{d23}\},\{\text{d31},\text{d32},\text{d33}\}\};}\\
	{\text{Bs}=\{\{\text{f11},\text{f12},\text{f13}\},\{\text{f21},\text{f22},\text{f23}\},\{\text{f31},\text{f32},\text{f33}\}\};}\\
	{\text{Cs}=\{\{\text{e11},\text{e12},\text{e13}\},\{\text{e21},\text{e22},\text{e23}\},\{\text{e31},\text{e32},\text{e33}\}\};}\\
	{\text{A0}=\{\{1,0,0\},\{0,1,0\},\{0,0,1\}\};}\\
	{\text{B0}=\{\{\text{b11},\text{b12},\text{b13}\},\{\text{b21},\text{b22},\text{b23}\},\{\text{b31},\text{b32},\text{b33}\}\};}\\
	{\text{C0}=\{\{\text{c11},\text{c12},\text{c13}\},\{\text{c21},\text{c22},\text{c23}\},\{\text{c31},\text{c32},\text{c33}\}\};}\\
	{\text{(*Define symbolic constants*)}}\\
	{\text{d1}=\text{d1};\text{d2}=\text{d2};}\\
	{\text{(*Define the symbolic matrix Y*)}}\\
	{Y=\{\{\text{X11},\text{X12},\text{X13},\text{X14},\text{X15},\text{X16}\},\{\text{X21},\text{X22},\text{X23},\text{X24},\text{X25},\text{X26}\},}\\
	{\{\text{X31},\text{X32},\text{X33},\text{X34},\text{X35},\text{X36}\},\{\text{X41},\text{X42},\text{X43},\text{X44},\text{X45},\text{X46}\},}\\
	{\{\text{X51},\text{X52},\text{X53},\text{X54},\text{X55},\text{X56}\},\{\text{X61},\text{X62},\text{X63},\text{X64},\text{X65},\text{X66}\}\};}\\
	{\text{(*Define the block matrices of the isogenies and the constant matrix J*)}}\\
	{M=\text{ArrayFlatten}[\{\{\text{As},\text{ConstantArray}[0,\{3,3\}]\},\{\text{Bs},\text{Cs}\}\}];}\\
	{\text{NMatrix}=\text{ArrayFlatten}[\{\{\text{A0},\text{ConstantArray}[0,\{3,3\}]\},\{\text{B0},\text{C0}\}\}];}\\
	{J=\{\{1,0,0,0,0,0\},\{0,0,0,1,0,0\},\{0,1,0,0,0,0\},}\\
	{\{0,0,0,0,1,0\},\{0,0,1,0,0,0\},\{0,0,0,0,0,1\}\};}\\
	{\text{J1}=\{\{1,0,0,0,0,0\},\{0,0,1,0,0,0\},\{0,0,0,0,1,0\},}\\
	{\{0,1,0,0,0,0\},\{0,0,0,1,0,0\},\{0,0,0,0,0,1\}\};}\\
	{\text{(*Compute the matrix in steps*)}}\\
	{\text{Finter0}=\text{Simplify}[M.Y];\text{Finter1}=\text{Simplify}[\text{Finter0}.\text{NMatrix}];}\\
	{\text{Finter2}=\text{Simplify}[\text{Finter1}.J];\text{Pmatrix}=\text{Simplify}[\text{J1}.\text{Finter2}];}\\
	{\text{(*Extract the elements of Pmatrix*)}}\\
	{\text{P11}=\text{Pmatrix}[[1,1]];\text{P12}=\text{Pmatrix}[[1,2]];\text{P13}=\text{Pmatrix}[[1,3]];\text{P14}=\text{Pmatrix}[[1,4]];}\\
	{\text{P15}=\text{Pmatrix}[[1,5]];\text{P16}=\text{Pmatrix}[[1,6]];}\\
	{\text{P21}=\text{Pmatrix}[[2,1]];\text{P22}=\text{Pmatrix}[[2,2]];\text{P23}=\text{Pmatrix}[[2,3]];\text{P24}=\text{Pmatrix}[[2,4]];}\\
	{\text{P25}=\text{Pmatrix}[[2,5]];\text{P26}=\text{Pmatrix}[[2,6]];}\\
	{\text{P31}=\text{Pmatrix}[[3,1]];\text{P32}=\text{Pmatrix}[[3,2]];\text{P33}=\text{Pmatrix}[[3,3]];\text{P34}=\text{Pmatrix}[[3,4]];}\\
	{\text{P35}=\text{Pmatrix}[[3,5]];\text{P36}=\text{Pmatrix}[[3,6]];}\\
	{\text{P41}=\text{Pmatrix}[[4,1]];\text{P42}=\text{Pmatrix}[[4,2]];\text{P43}=\text{Pmatrix}[[4,3]];\text{P44}=\text{Pmatrix}[[4,4]];}\\
	{\text{P45}=\text{Pmatrix}[[4,5]];\text{P46}=\text{Pmatrix}[[4,6]];}\\
	{\text{P51}=\text{Pmatrix}[[5,1]];\text{P52}=\text{Pmatrix}[[5,2]];\text{P53}=\text{Pmatrix}[[5,3]];\text{P54}=\text{Pmatrix}[[5,4]];}\\
	{\text{P55}=\text{Pmatrix}[[5,5]];\text{P56}=\text{Pmatrix}[[5,6]];}\\
	{\text{P61}=\text{Pmatrix}[[6,1]];\text{P62}=\text{Pmatrix}[[6,2]];\text{P63}=\text{Pmatrix}[[6,3]];\text{P64}=\text{Pmatrix}[[6,4]];}\\
	{\text{P65}=\text{Pmatrix}[[6,5]];\text{P66}=\text{Pmatrix}[[6,6]];}\\
	{\text{(*The relations we need to check*)}}\\
	{\text{Rarch}=\text{P11}*\text{P22}-\text{P12}*\text{P21}-\text{d1};}\\
	{\text{Rssing}=\text{P31}*\text{P42}-\text{P32}*\text{P41}-\text{d2}*(\text{P11}*\text{P22}-\text{P12}*\text{P21});}\\
	{\text{Rord1}=\text{P22}*\text{P32}-\text{P12}*\text{P42};\text{(*This is A12*)}}\\
	{\text{(*}\text{Expand} \text{ the } \text{polynomials}.\text{*)}}\\
	{\text{expRarch}=\text{Expand}[\text{Rarch}];\text{expRssing}=\text{Expand}[\text{Rssing}];\text{expRord1}=\text{Expand}[\text{Rord1}];}\\
	{\text{(*Save each to a separate file*)}}\\
	{\text{DumpSave}[\text{{``}archimedeanrelA3.mx{''}},\text{expRarch}];\text{DumpSave}[\text{{``}ssingrelA3.mx{''}},\text{expRssing}];}\\
	{\text{DumpSave}[\text{{``}ordinaryA31.mx{''}},\text{expRord1}];}\\
	{\text{(*Output a confirmation message*)}}\\
	{\text{Print}[\text{{``}Quantities saved to specified files.{''}}];}\)
\end{doublespace}}
\subsubsection{Computing a Gr\"obner basis}\label{section:grobnerbasis}
The second reusable code outputs a Gr\"obner basis of the ideal $I(\SP_6)$ and saves it in a separate file to be used in the membership tests.

{\small\begin{doublespace}
	\noindent\({\text{ClearAll}[\text{{``}Global$\grave{ }$*{''}}];}\\
	{\text{(*Define variables*)}}\\
	{\text{vars}=\{\text{X11},\text{X12},\text{X13},\text{X14},\text{X15},\text{X16},\text{X21},\text{X22},\text{X23},\text{X24},\text{X25},\text{X26},}\\
	{\text{X31},\text{X32},\text{X33},\text{X34},\text{X35},\text{X36},\text{X41},\text{X42},\text{X43},\text{X44},\text{X45},\text{X46},\text{X51},\text{X52},\text{X53},\text{X54},\text{X55},}\\
	{\text{X56},\text{X61},\text{X62},\text{X63},\text{X64},\text{X65},\text{X66}\};}\\
	{\text{(*Build the ``generic'' 6x6 matrix*)}}\\
	{X=\text{Partition}[\text{vars},6];}\\
	{\text{(*Standard symplectic form*)}}\\
	{J=\text{ArrayFlatten}[\{\{\text{ConstantArray}[0,\{3,3\}],\text{IdentityMatrix}[3]\},}\\
	{\{-\text{IdentityMatrix}[3],\text{ConstantArray}[0,\{3,3\}]\}\}];}\\
	{\text{(*The 15 independent quadratic generators*)}}\\
	{\text{gens}=\text{Flatten}@\text{Table}[\text{Expand}[(\text{Transpose}[X].J.X-J)[[i,j]]],\{i,1,6\},\{j,i+1,6\}];}\\
	{\text{(*Compute the Gr{\" o}bner basis*)}}\\
	{\text{groebnerBasisA3}=\text{GroebnerBasis}[\text{gens},\text{vars},\text{MonomialOrder}\text{-$>$}\text{DegreeReverseLexicographic}];}\\
	{\text{(*Save generators and basis*)}}\\
	{\text{DumpSave}[\text{{``}groebnerbasis$\_$sp6.mx{''}},\{\text{gens},\text{groebnerBasisA3}\}];}\\
	{\text{Print}[\text{{``}The generators and Gr{\" o}bner basis have been saved to groebnerbasis$\_$sp6.mx{''}}];}\)
\end{doublespace}}
\subsection{Membership tests}\label{section:appmembership}
The codes used for the membership tests of the three polynomials are listed below.

\subsubsection{Archimedean places}\label{section:apparchimedean}
{\small\begin{doublespace}
	\noindent\({\text{ClearAll}[\text{{``}Global$\grave{ }$*{''}}];}\\
	{\text{(*Load the output from the previous codes*)}}\\
	{\text{Get}[\text{{``}archimedeanrelA3.mx{''}}] ;\text{Get}[\text{{``}groebnerbasis$\_$sp6.mx{''}}];}\\
	{\text{(*Define vars to include only polynomial variables*)}}\\
	{\text{vars}=\{\text{X11},\text{X12},\text{X13},\text{X14},\text{X15},\text{X16},\text{X21},\text{X22},\text{X23},\text{X24},\text{X25},\text{X26},}\\
	{\text{X31},\text{X32},\text{X33},\text{X34},\text{X35},\text{X36},\text{X41},\text{X42},\text{X43},\text{X44},}\\
	{\text{X45},\text{X46},\text{X51},\text{X52},\text{X53},\text{X54},\text{X55},\text{X56},\text{X61},\text{X62},\text{X63},\text{X64},\text{X65},\text{X66}\};}\\
	{\text{(*Ensure constants are treated as symbolic coefficients*)}}\\
	{\text{SetAttributes}[\{\text{d11},\text{d12},\text{d13},\text{d21},\text{d22},\text{d23},\text{d31},\text{d32},\text{d33},\text{f11},\text{f12},\text{f13},\text{f21},}\\
	{\text{f22},\text{f23},\text{f31},\text{f32},\text{f33},\text{b11},\text{b12},\text{b13},\text{b21},\text{b22},\text{b23},\text{b31},\text{b32},\text{b33},}\\
	{\text{c11},\text{c12},\text{c13},\text{c21},\text{c22},\text{c23},\text{c31},\text{c32},\text{c33},\text{e11},\text{e12},\text{e13},\text{e21},\text{e22},}\\
	{\text{e23},\text{e31},\text{e32},\text{e33},\text{d1},\text{d2}\},\text{Constant}];}\\
	{\text{(*Compute the polynomial reduction with respect to the Gr{\" o}bner basis*)}}\\
	{\text{redRarch}=\text{PolynomialReduce}[\text{expRarch},\text{groebnerBasisA3},\text{vars}];}\\
	{\text{remRarch}=\text{Last}[\text{redRarch}];}\\
	{\text{(*Extract coefficients and monomials of Rarch*)}}\\
	{\text{moncoeffRarch}=\text{CoefficientRules}[\text{remRarch},\text{vars}];}\\
	{\text{(*}\text{Format the result as a list} \text{ with two columns}:\text{monomials} \text{ and } \text{coefficients}\text{*)}}\\
	{\text{ListRarch}=\text{Table}[\{\text{Times}\text{@@}(\text{vars}{}^{\wedge}\text{rule}[[1]]),\text{rule}[[2]]\},\{\text{rule},\text{moncoeffRarch}\}];}\\
	{\text{(*Define a function to process one chunk*)}}\\
	{\text{processChunk}[\text{chunk$\_$}]\text{:=}\text{Table}[\{\text{entry}[[1]],\text{Factor}[\text{entry}[[2]]]\},\{\text{entry},\text{chunk}\}];}\\
	{\text{(*Set chunk size*)}}\\
	{\text{chunkSize}=100; \text{(*}\text{Adjust based on system's capability}\text{*)}}\\
	{\text{(*Break the list into chunks*)}}\\
	{\text{chunksRarch}=\text{Partition}[\text{ListRarch},\text{chunkSize},\text{chunkSize},1,\{\}];}\\
	{\text{(*Process each chunk and collect results*)}}\\
	{\text{finalListRarch}=\text{Flatten}[\text{processChunk}[\#]\&\text{/@}\text{chunksRarch},1];}\\
	{\text{(*Save the factored list*)}}\\
	{\text{DumpSave}[\text{{``}finalListArchimedeanA3.mx{''}},\{\text{finalListRarch}\}];}\\
	{\text{(*Output a confirmation message*)}}\\
	{\text{Print}[\text{{``}Factored list saved to specified mx file.{''}}];}\)
\end{doublespace}}

\subsubsection{Supersingular places}\label{section:appsupersing}

{\small\begin{doublespace}
	\noindent\({\text{ClearAll}[\text{{``}Global$\grave{ }$*{''}}];}\\
	{\text{(*Load the output from the previous codes*)}}\\
	{\text{Get}[\text{{``}ssingrelA3.mx{''}}] ;\text{Get}[\text{{``}groebnerbasis$\_$sp6.mx{''}}];}\\
	{\text{(*The rest is done as in the previous code*)}}\\
	{\text{vars}=\{\text{X11},\text{X12},\text{X13},\text{X14},\text{X15},\text{X16},\text{X21},\text{X22},\text{X23},\text{X24},\text{X25},\text{X26},}\\
{\text{X31},\text{X32},\text{X33},\text{X34},\text{X35},\text{X36},\text{X41},\text{X42},\text{X43},\text{X44},}\\
{\text{X45},\text{X46},\text{X51},\text{X52},\text{X53},\text{X54},\text{X55},\text{X56},\text{X61},\text{X62},\text{X63},\text{X64},\text{X65},\text{X66}\};}\\
	{\text{SetAttributes}[\{\text{d11},\text{d12},\text{d13},\text{d21},\text{d22},\text{d23},\text{d31},\text{d32},\text{d33},\text{f11},\text{f12},\text{f13},\text{f21},}\\
{\text{f22},\text{f23},\text{f31},\text{f32},\text{f33},\text{b11},\text{b12},\text{b13},\text{b21},\text{b22},\text{b23},\text{b31},\text{b32},\text{b33},}\\
{\text{c11},\text{c12},\text{c13},\text{c21},\text{c22},\text{c23},\text{c31},\text{c32},\text{c33},\text{e11},\text{e12},\text{e13},\text{e21},\text{e22},}\\
{\text{e23},\text{e31},\text{e32},\text{e33},\text{d1},\text{d2}\},\text{Constant}];}\\
	{\text{redRssing}=\text{PolynomialReduce}[\text{expRssing},\text{groebnerBasisA3},\text{vars}];}\\
	{\text{remRssing}=\text{Last}[\text{redRssing}];}\\
	{\text{moncoeffRssing}=\text{CoefficientRules}[\text{remRssing},\text{vars}];}\\
	{\text{ListRssing}=\text{Table}[\{\text{Times}\text{@@}(\text{vars}{}^{\wedge}\text{rule}[[1]]),\text{rule}[[2]]\},\{\text{rule},\text{moncoeffRssing}\}];}\\
	{\text{processChunk}[\text{chunk$\_$}]\text{:=}\text{Table}[\{\text{entry}[[1]],\text{Factor}[\text{entry}[[2]]]\},\{\text{entry},\text{chunk}\}];}\\
	{\text{chunkSize}=100; \text{(*}\text{Adjust based on system's capability}\text{*)}}\\
	{\text{chunksRssing}=\text{Partition}[\text{ListRssing},\text{chunkSize},\text{chunkSize},1,\{\}];}\\
	{\text{finalListRssing}=\text{Flatten}[\text{processChunk}[\#]\&\text{/@}\text{chunksRssing},1];}\\
	{\text{DumpSave}[\text{{``}finalListsupersingularA3.mx{''}},\{\text{finalListRssing}\}];}\\
	{\text{Print}[\text{{``}Factored list saved to specified mx file.{''}}];}\)
\end{doublespace}}

\subsubsection{Ordinary places}\label{section:appordinary}
{\small\begin{doublespace}
	\noindent\({\text{ClearAll}[\text{{``}Global$\grave{ }$*{''}}];}\\
	{\text{(*Load the output from the previous codes*)}}\\
	{\text{Get}[\text{{``}ordinaryA31.mx{''}}] ;\text{Get}[\text{{``}groebnerbasis$\_$sp6.mx{''}}];}\\
{\text{(*The rest is done as in the previous codes*)}}\\
{\text{vars}=\{\text{X11},\text{X12},\text{X13},\text{X14},\text{X15},\text{X16},\text{X21},\text{X22},\text{X23},\text{X24},\text{X25},\text{X26},}\\
{\text{X31},\text{X32},\text{X33},\text{X34},\text{X35},\text{X36},\text{X41},\text{X42},\text{X43},\text{X44},}\\
{\text{X45},\text{X46},\text{X51},\text{X52},\text{X53},\text{X54},\text{X55},\text{X56},\text{X61},\text{X62},\text{X63},\text{X64},\text{X65},\text{X66}\};}\\
{\text{SetAttributes}[\{\text{d11},\text{d12},\text{d13},\text{d21},\text{d22},\text{d23},\text{d31},\text{d32},\text{d33},\text{f11},\text{f12},\text{f13},\text{f21},}\\
{\text{f22},\text{f23},\text{f31},\text{f32},\text{f33},\text{b11},\text{b12},\text{b13},\text{b21},\text{b22},\text{b23},\text{b31},\text{b32},\text{b33},}\\
{\text{c11},\text{c12},\text{c13},\text{c21},\text{c22},\text{c23},\text{c31},\text{c32},\text{c33},\text{e11},\text{e12},\text{e13},\text{e21},\text{e22},}\\
{\text{e23},\text{e31},\text{e32},\text{e33},\text{d1},\text{d2}\},\text{Constant}];}\\
	{\text{redRord1}=\text{PolynomialReduce}[\text{expRord1},\text{groebnerBasisA3},\text{vars}];}\\
	{\text{remRord1}=\text{Last}[\text{redRord1}];}\\
	{\text{moncoeffRord1}=\text{CoefficientRules}[\text{remRord1},\text{vars}];}\\
	{\text{ListRord1}=\text{Table}[\{\text{Times}\text{@@}(\text{vars}{}^{\wedge}\text{rule}[[1]]),\text{rule}[[2]]\},\{\text{rule},\text{moncoeffRord1}\}];}\\
	{\text{processChunk}[\text{chunk$\_$}]\text{:=}\text{Table}[\{\text{entry}[[1]],\text{Factor}[\text{entry}[[2]]]\},\{\text{entry},\text{chunk}\}];}\\
	{\text{chunkSize}=100; \text{(*}\text{Adjust based on system's capability}\text{*)}}\\
	{\text{chunksRord1}=\text{Partition}[\text{ListRord1},\text{chunkSize},\text{chunkSize},1,\{\}];}\\
	{\text{finalListRord1}=\text{Flatten}[\text{processChunk}[\#]\&\text{/@}\text{chunksRord1},1];}\\
	{\text{DumpSave}[\text{{``}finalListordA31.mx{''}},\{\text{finalListRord1}\}];}\\
	{\text{Print}[\text{{``}Factored list saved to specified mx file.{''}}];}\)
\end{doublespace}}
\bibliographystyle{alpha}
\bibliography{biblio}
\end{document}